\newtheorem{theorem}{Theorem}[section]
\newtheorem{corollary}[theorem]{Corollary}
\newtheorem{definition}[theorem]{Definition}
\newtheorem{remark}[theorem]{Remark}
\newtheorem{proposition}[theorem]{Proposition}
\def \R{\mathbb R}
\def \N{\mathbb N}
\newcommand {\B}[1][n] {B_2^{#1}}
\def \co{\operatorname{conv}}
\def \s{S^{n-1}}
\def \I{\operatorname{Id}}
\renewcommand {\S}{S^{nm-1}}
\renewcommand{\chi}{\operatorname{1}}
\newcommand{\vol}[2][n]{\left|#2 \right|_{#1}}
\newcommand{\Vol}[1]{\vol[nm]{#1}}
\newcommand {\M}[1][n,m] {\operatorname{M}_{#1}(\R)}
\newcommand {\conbod}[1][n,m] {\mathcal{K}^{#1}}
\title[A Rogers--Brascamp--Lieb--Luttinger inequality]{A Rogers--Brascamp--Lieb--Luttinger inequality in the space of matrices}
\author{J. Haddad}
\address{Departamento de An\'alisis Matem\'atico, Facultad de Matem\'aticas, Universidad de Sevilla, Sevilla, Spain}
\email{jhaddad@us.es}
\begin{document}

\begin{abstract}
	We consider convex bodies in $\M$, the space of matrices of $n$-rows and $m$-columns.
	A special case of fiber-symmetrization in $\M$ was recently introduced in \cite{HLPRY23,HLPRY23_2}.
	We prove a Rogers--Brascamp--Lieb--Luttinger type inequality with respect to this symmetrization and provide some applications.
\end{abstract}

\maketitle

\section{Introduction}
Let $K \subseteq \R^n$ be a Lebesgue-measurable set, and $v$ be a unit vector in $\R^n$.
The Steiner symmetrical of $K$ with respect to $v$ is
\[S_vK = \{x + t v \in \R^n: t\in \R, x \in P_{v^\perp} K, |t| \leq \ell_v(x)/2\}\]
where $P_{v^\perp}K$ is the orthogonal projection of $K$ on the hyperplane orthogonal to $v$ and $\ell_v(x)$ is the one-dimensional measure of the set obtained intersecting $K$ with the line parallel to $v$ passing through $x$.

Steiner symmetrization has a huge number of applications in geometry ranging from isoperimetric inequalities to functional analysis and it is at the core of the solution of many problems in Convex Geometry.
For an overview of the many applications we refer to the books \cite{AGA, Sh1} and the references therein.
The operator $S_v$ enjoys two important properties. First, it preserves the Lebesgue measure, and second, any convex body (compact, convex with non-empty interior) can be transformed by an infinite sequence of successive symmetrizations, into a Euclidean ball.

Let $f:\R^n \to \R$ be a non-negative measurable function and $t\geq 0$. Denote $\{f \geq t\} := \{x \in \R^n : f(x) \geq t\}$, and similarly with $\leq$.
We say that $f$ is quasi-concave if its super-level sets $\{f \geq t\}$ are convex for every $t \in \R$.
A function $f$ is quasi-convex if its sub-level sets $\{f \leq t\}$ are convex for every $t \in \R$.

The Steiner symmetrization of a quasi-concave function $f$ with respect to $v$ is defined by
\[f^{(v)}(x) = \int_0^\infty \chi_{S_v\{f\geq t\}}(x) dt\]
where $\chi_K$ is the characteristic function of the set $K$.
The function $f^{(v)}$ can be characterized by being the unique function whose super-level sets are the Steiner symmetricals of the super-level sets of $f$, in the direction $v$.

Rogers \cite{rogers1957single} and Brascamp, Lieb and Luttinger \cite{brascamp1974general} proved independently a general integral inequality, which generalizes the celebrated Riesz convolution inequality:

\begin{theorem}
	\label{thm_bll_original}
	Let $k,d \geq 1$ and $f_i:\R^n \to \R$ with $1 \leq i \leq k$, be non-negative and measurable functions, and let $a_j^{(i)}$ be real numbers with $1 \leq j \leq d, 1 \leq i \leq k$, then
	\begin{multline}
		\label{eq_bll_original}
		\int_{\R^n} \cdots \int_{\R^n} \prod_{i=1}^k  f_i \left( \sum_{j=1}^d a_j^{(i)} x_j \right) d x_1 \ldots d x_d \\
		\leq \int_{\R^n} \cdots \int_{\R^n} \prod_{i=1}^k f^{(v)}_i \left(\sum_{j=1}^d a_j^{(i)} x_j \right) d x_1 \ldots d x_d.
	\end{multline}
\end{theorem}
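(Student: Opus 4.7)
The plan is a two-step reduction: by Fubini along the direction $v$, the $n$-dimensional inequality reduces to the $n=1$ case, and the $n=1$ case is handled by layer-cake plus a classical rearrangement argument on the line.

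After an orthogonal change of coordinates we may assume $v = e_n$ and write $x_j = (y_j,t_j) \in \R^{n-1}\times\R$. Since $\sum_j a_j^{(i)} x_j = \bigl(\sum_j a_j^{(i)} y_j,\; \sum_j a_j^{(i)} t_j\bigr)$, Fubini recasts the left-hand side of \eqref{eq_bll_original} as
\[
\int_{(\R^{n-1})^d} \left[\int_{\R^d} \prod_{i=1}^k f_i\Bigl(\sum_j a_j^{(i)} y_j,\; \sum_j a_j^{(i)} t_j\Bigr) dt_1\cdots dt_d\right] dy_1\cdots dy_d.
\]
For each fixed $(y_1,\ldots,y_d)$, setting $\tilde f_i(t) := f_i(\sum_j a_j^{(i)} y_j, t)$ turns the bracketed integral into the $n=1$ instance of \eqref{eq_bll_original} for $\tilde f_1,\ldots,\tilde f_k$. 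Directly from the definition $f^{(v)}(x) = \int_0^\infty \chi_{S_v\{f\geq t\}}(x)\,dt$, for $v = e_n$ the one-variable function $t \mapsto f_i^{(v)}(\sum_j a_j^{(i)} y_j, t)$ is the symmetric decreasing rearrangement of $\tilde f_i$ on $\R$. Hence the $n=1$ inequality applied slice by slice and integrated in $y_1,\ldots,y_d$ yields \eqref{eq_bll_original}.

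For the one-dimensional case, I would apply layer-cake $f_i(s) = \int_0^\infty \chi_{\{f_i \geq u\}}(s)\,du$ inside the product and expand by multilinearity and Fubini. This reduces the problem to the case in which every $f_i$ is the indicator of a finite-measure set $A_i \subseteq \R$, and by inner regularity to the case where each $A_i$ is a finite disjoint union of compact intervals. In this reduced form the inequality is the classical real-line Rogers--Brascamp--Lieb--Luttinger statement, proved by induction on the total number of intervals via an exchange move that, at each step, pushes the endpoints of some $A_i$ symmetrically toward the origin without decreasing the integral.

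The genuinely nontrivial step is this one-dimensional indicator base case: the dimensional reduction and the layer-cake decomposition are essentially formal, while the analytic heart is the polarization/exchange argument on the line. This is exactly where the original proofs of Rogers \cite{rogers1957single} and Brascamp--Lieb--Luttinger \cite{brascamp1974general} expend their effort, and any self-contained proof of Theorem \ref{thm_bll_original} must ultimately supply such a step.
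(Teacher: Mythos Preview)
The paper does not prove Theorem~\ref{thm_bll_original}; it is stated as a known background result, attributed to Rogers \cite{rogers1957single} and Brascamp--Lieb--Luttinger \cite{brascamp1974general}, and used only as motivation. So there is no ``paper's own proof'' to compare against directly. Your outline---Fubini reduction to $n=1$, layer-cake to indicators, approximation by finite unions of intervals, then the polarization/exchange argument---is precisely the classical route taken in those references, and your identification of the one-dimensional indicator step as the analytic core is accurate. As a sketch it is correct, though (as you yourself note) it is not self-contained: the exchange lemma is stated but not carried out.

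It is worth noting, however, that the paper's main result, Theorem~\ref{thm_bll_with_linears}, does specialize at $m=1$ to give a proof of Theorem~\ref{thm_bll_original} \emph{restricted to quasi-concave} $f_i$, and that proof follows a genuinely different path. Instead of polarization on the line, the paper uses the Brunn--Minkowski inequality: Theorem~\ref{thm_intersets} shows $\vol[m]{\bigcap_i K_i} \leq \vol[m]{\bigcap_i \tfrac12 D K_i}$ for convex $K_i$, which feeds Theorem~\ref{thm_symsets} and then Theorem~\ref{thm_bll_direct} via layer-cake. The trade-off is that this Brunn--Minkowski route is shorter and avoids the combinatorial interval argument entirely, but it only works when the super-level sets are convex; the paper explicitly emphasizes (see the discussion after Theorem~\ref{thm_bll_with_linears} and Remark~\ref{rem_not_only_convex}) that its method does not recover the full measurable case of Theorem~\ref{thm_bll_original}. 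Your sketch, by contrast, handles arbitrary non-negative measurable $f_i$, at the price of needing the classical one-dimensional lemma.
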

Notice that if the matrix with entries $a_j^{(i)}$ has rank less than $d$, the integrals at both sides are either $0$ (if all $f_i$ are zero a.e.) or infinite.
If $k=d$ and the matrix is invertible, there is equality by a change of variables, and the fact that symmetrization preserves the integral of each function.
This inequality can be applied iteratively for different directions $v \in S^{n-1}$, and by taking limits, we may replace $f^{(v)}$ in the right-hand side of \eqref{eq_bll_original} by the symmetric decreasing rearrangement $f^*$ (see \cite{brascamp1974general}).

An interesting generalization of Theorem \ref{thm_bll_original} was used to study randomized isoperimetric inequalities by Paouris and Pivovarov \cite{PP17-2} (Theorem \ref{thm_bll_paouris_pivovarov} below). This inequality is attributed to Christ \cite{christ1984estimates}.  The following definition is given in \cite{PP17-2}.
\begin{definition}
	\label{def_steinerconcave}
	A function $f:(\R^n)^d \to \R$ with $d \geq 1$ is called Steiner-concave (resp.\ Steiner-convex) if for every $v \in S^{n-1}$ and $y_1, \ldots, y_d \in v^\perp$, the function $G:\R^d \to \R$ defined by
	\[G(t_1, \ldots, t_d) = f(y_1 + t_1 v, \ldots, y_d + t_d v)\]
	is even and quasi-concave (resp.\ quasi-convex).
\end{definition}
The evenness in this definition is of absolute central importance, as we shall see in the course of the paper.

\begin{theorem}[{\cite[Theorem 3.8]{PP17-2}}]
	\label{thm_bll_paouris_pivovarov}
	Let $f_1, \ldots, f_{k_1}$ with $k_1 \geq 1$, be non-negative integrable functions on $\R^n$ and let $a_j^{(i)}$ be real numbers with $1 \leq j\leq d, 1 \leq i \leq k_1$.
	Let $F^{(i)}: (\R^n)^d \to \R$ with $d\geq 1$ and $1 \leq i \leq k_2$ be non-negative Steiner-concave functions, and let $\mu$ be a non-negative measure with a rotationally invariant quasi-concave density in $\R^n$. Then,
	\begin{multline}
		\int_{\R^n} \cdots \int_{\R^n}  \prod_{i=1}^{k_2} F^{(i)}(x_1, \ldots, x_d) \prod_{i=1}^{k_1} f_i\left(\sum_{j=1}^d a_j^{(i)} x_j\right) d\mu(x_1) \ldots d\mu(x_d)\\
		\leq 
		\int_{\R^n} \cdots \int_{\R^n} \prod_{i=1}^{k_2} F^{(i)}(x_1, \ldots, x_d) \prod_{i=1}^{k_1} f^{(v)}_i\left(\sum_{j=1}^d a_j^{(i)} x_j\right) d\mu(x_1) \ldots d\mu(x_d).\\
	\end{multline}
\end{theorem}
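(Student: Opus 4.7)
The plan is to fix the direction $v \in \s$, apply Fubini to slice each integration variable as $x_j = y_j + t_j v$ with $y_j \in v^\perp$ and $t_j \in \R$, and reduce the inequality to a weighted one-dimensional Brascamp--Lieb--Luttinger statement on $\R^d$, to which Theorem \ref{thm_bll_original} specialized to $n=1$ applies.

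Under this slicing, the density $d\mu(x_j) = \rho(|x_j|)\, dx_j$ contributes the factor $\rho\bigl(\sqrt{|y_j|^2+t_j^2}\bigr)$, which, for each fixed $y_j$, is an even quasi-concave function of $t_j$ (since rotational invariance plus quasi-concavity forces $\rho$ to be radially non-increasing). The argument of $f_i$ splits as $s_i(y) + \bigl(\sum_j a_j^{(i)} t_j\bigr) v$ with $s_i(y) := \sum_j a_j^{(i)} y_j \in v^\perp$, so the section $t \mapsto f_i^{(v)}(s_i(y)+tv)$ is by construction the one-dimensional symmetric decreasing rearrangement $g_i^*$ of $g_i(t) := f_i(s_i(y)+tv)$. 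By Definition \ref{def_steinerconcave}, each map $t \mapsto F^{(i)}(y_1+t_1v,\ldots,y_d+t_dv)$ is even and quasi-concave on $\R^d$, hence its superlevel sets are centrally symmetric convex subsets of $\R^d$. Applying the layer-cake formula to each $F^{(i)}$ and each $\rho$-factor and then exchanging the order of integration, the problem reduces to the following core 1D claim: for every centrally symmetric convex body $C \subseteq \R^d$, non-negative integrable $g_1,\ldots,g_{k_1}:\R \to \R$, and vectors $b^{(1)},\ldots,b^{(k_1)} \in \R^d$,
\[
\int_{\R^d} \chi_C(t) \prod_{i=1}^{k_1} g_i(\langle b^{(i)},t\rangle)\, dt \leq \int_{\R^d} \chi_C(t) \prod_{i=1}^{k_1} g_i^*(\langle b^{(i)},t\rangle)\, dt.
\]

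To establish this core claim, approximate $C$ from outside by symmetric polytopes $C_N = \bigcap_{\alpha=1}^{N} \{t : |\langle w_\alpha,t\rangle| \leq c_\alpha\}$, so that $\chi_{C_N}(t) = \prod_{\alpha=1}^{N} \chi_{[-c_\alpha,c_\alpha]}(\langle w_\alpha,t\rangle)$. Apply Theorem \ref{thm_bll_original} on $\R$ to the $k_1 + N$ functions $\{g_i\}_i \cup \{\chi_{[-c_\alpha,c_\alpha]}\}_\alpha$ with coefficient vectors $\{b^{(i)}\}_i \cup \{w_\alpha\}_\alpha$, symmetrizing in the direction $v = 1 \in \R$. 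Since each $\chi_{[-c_\alpha,c_\alpha]}$ is already its own symmetric decreasing rearrangement, only the $g_i$'s are modified, yielding the inequality for $C_N$; a standard dominated convergence argument as $N\to\infty$ then transfers it to $C$.

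The main obstacle I anticipate is the careful bookkeeping of evenness: the polytope approximation by \emph{symmetric} slabs $\{|\langle w_\alpha,t\rangle| \leq c_\alpha\}$ is essential, because the one-dimensional rearrangement preserves indicators of symmetric intervals but not of half-lines. Without the evenness in Definition \ref{def_steinerconcave}, the superlevel sets of each fiber-section of $F^{(i)}$ would fail to be centrally symmetric, so one-sided half-space slabs $\{\langle w_\alpha,t\rangle \leq c_\alpha\}$ would have to appear in the approximation, and Theorem \ref{thm_bll_original} would symmetrize those slabs as well---destroying the identity $\chi_{C_N}^{(v)} = \chi_{C_N}$ and with it the whole reduction. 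This is precisely where the evenness hypothesis in the definition of Steiner-concavity becomes indispensable.
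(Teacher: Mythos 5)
Your proposal is correct in substance, but it takes a genuinely different route from the paper's treatment of this statement. The paper never proves Theorem \ref{thm_bll_paouris_pivovarov} directly: it observes that it is the special case $m=d$, $L_i=\I_d$ of Theorem \ref{thm_bll_with_linears}, whose proof runs through the layer-cake formula, the inclusion $\frac 12 D\left(\bigcap_i K_i\right) \subseteq \bigcap_i \frac 12 D K_i$, and the Brunn--Minkowski bound $\vol[m]{\frac 12 D L} \geq \vol[m]{L}$ on each fiber --- no one-dimensional rearrangement inequality is ever invoked. The price of that route, which the paper states explicitly, is that it recovers the result only when the $f_i$ are quasi-concave, since for non-convex level sets $\bar S_v$ no longer coincides with Steiner symmetrization (Remark \ref{rem_not_only_convex}). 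Your argument --- Fubini along $v$, layer cake applied to the $F^{(i)}$ and to the radial density of $\mu$, outer approximation of the resulting origin-symmetric convex level sets by symmetric slabs, and then the one-dimensional case of Theorem \ref{thm_bll_original} --- is essentially the original Christ/Paouris--Pivovarov proof, and it yields the theorem for general non-negative integrable $f_i$ as actually stated. What the paper's approach buys instead is a uniform framework (a single Brunn--Minkowski argument) that also covers the higher-order symmetrization $\bar S_v$ for $m>1$; what yours buys is the full generality of the cited statement. Your closing observation on why the evenness in Definition \ref{def_steinerconcave} is indispensable is exactly right and matches the paper's own emphasis.

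Two small points to tighten. In the passage $N\to\infty$ you need $\lim_N \int \chi_{C_N}\prod_i g_i^{*} = \int \chi_{C}\prod_i g_i^{*}$; decreasing monotone convergence requires a finite term, which can fail when several $b^{(i)}$ are parallel and the $g_i$ are unbounded, so one should first truncate $g_i$ to $\min(g_i,M)$ (which commutes with symmetric decreasing rearrangement) and let $M\to\infty$ at the end. Also, when the relevant level set $C$ is unbounded, the vectors $w_\alpha$ together with the $b^{(i)}$ may fail to span $\R^d$; Theorem \ref{thm_bll_original} still gives the inequality in that degenerate case (both sides being $0$ or $+\infty$), but this should be said.
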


We shall see that this theorem can be generalized in an elegant way if one considers a symmetrization procedure adapted to the product structure of $(\R^n)^d$.
Both sets of functions $F^{(i)}, f_i$ together with the measure $\mu$ will be seen as objects of the same nature under this perspective.
Here each $F^{(i)}$ is a function that is already symmetric in a sense that will be made clear next.

Throughout the paper $d,n,m \geq 1$ will be fixed natural numbers.
It is natural to identify $(\R^n)^m$ with the set of matrices of $n$ rows and $m$ columns, $\M$.
For $(x_1, \ldots, x_m) \in (\R^n)^m$ we write $x \in \M$ for the matrix whose columns are the vectors $x_i$.
The expression $\sum_{j=1}^d a_j^{(i)} x_j$ in inequality \eqref{eq_bll_original} can be written as a matrix product $x a^{(i)}$, where $a^{(i)}$ is the column vector whose entries are $a_j^{(i)}$.

In \cite{McM99} McMullen defined the fiber combination of convex sets, and later in \cite{BGG17} Bianchi, Gardner and Gronchi defined the fiber-symmetrization with respect to a subspace of $\R^d$.
We will use this symmetrization procedure in $\M$, with respect to a specific set of subspaces.
This particular case of the fiber symmetrization, which is well adapted to the product structure of $\M = (\R^n)^m$, has found applications only recently, see \cite{HLPRY23} and \cite{HLPRY23_2}.

\begin{definition}
	\label{def_subspaces}
	For $v \in\s \subseteq \M[n,1]$, consider the $m$-dimensional space 
	\[v^m := \{v t: t \in \M[1,m]\} \subseteq \M[n,m]\]
	and let 
	\[v^{\perp m} = \{x \in \M: v^t x = 0 \in \M[1,m]\}.\]
\end{definition}
The subspace $v^m$ consists of matrices whose columns are all multiples of $v$, while $v^{\perp m}$ consists of matrices, whose columns are all perpendicular to $v$.
Of course, we have $\M = v^m \oplus v^{\perp m}$.

For any set $K \subseteq \R^m$, its difference body is the set $D K = \{x-y \in \R^m: x,y \in K\}$.

\begin{definition}
	\label{def_setsym}
	Let $K \subseteq \M$ be a measurable set and $v \in\s$.
	We define the $m$-th higher-order Steiner symmetrical of $K$ with respect to $v$ by
	\[
		\bar S_v K = \bigcup_{y \in v^{\perp m}} \left( y + \frac 12 D(K \cap (y + v^m)) \right),
	\]
Furthermore, we have
	\begin{multline}
		\label{eq_sym_def}
		\bar S_v K=
		\Big\{y+ v \frac{t-s}2 \in \M \colon y \in v^{\perp m}, t,s \in \M[1,m],\\ \text{ and } y + v t, y + v s \in K \Big\}.
	\end{multline}
\end{definition}

For convenience, we adopt the following definition:
\begin{definition}
	\label{def_quasiconcave}
	Let $f:\M \to \R$ be a measurable function.
	We say that $f$ is $m$-quasi-concave (resp.\ $m$-quasi-convex) if for every $v \in S^{n-1}$, $y \in v^{\perp m}$, the function $t \in \M[1,m] \mapsto f(y+vt)$ is quasi-concave (resp.\ quasi-convex).
\end{definition}
A $1$-quasi-concave (resp.\ $1$-quasi-convex) function is just a quasi-concave (resp.\ quasi-convex) function $f:\R^n \to \R$.
If a function $f:\M \to \R$ is quasi-concave (resp.\ convex) in the usual sense, then it is $m$-quasi-concave (resp.\ $m$-quasi-convex), since one requires the concavity (resp.\ convexity) of the restriction of $f$ to a specific set of $m$-dimensional subspaces, instead of the whole space.

The function $x \in \M[n,n] \mapsto |\det(x)|$ is $n$-quasi-convex but not convex. To see this, use the multilinearity of the determinant to write
\[|\det(x+vt)| = \left|\det(x) + \sum_{i=1}^n t_i \det(\tilde x_i) \right|\]
where $x_i$ is the matrix obtained replacing the $i$-th column of $x$ by $v$, and $t_i$ are the coordinates of $t$.

Notice that the notion of Steiner-concavity of an $m$-quasi-concave function corresponds to the fact that its super-level sets are invariant by $\bar S_v$ for every $v \in \s$.
In particular, the characteristic function $\chi_K$ of a convex set $K \subseteq \M$, is Steiner-concave if and only if $\bar S_v K = K$ for every $v \in S^{n-1}$.

More examples of Steiner-concave functions (which are in particular $m$-quasi-concave), can be found in \cite[Corollary 4.2, Corollary 4.5, Theorem 4.7, Theorem 4.8 and Proposition 4.9]{PP17-2}.
\begin{remark}
	\label{rem_not_only_convex}
	Notice that for $m=1$, $v^m$ is just a line and, if $K$ is convex, the intersection $K \cap (y + v^m)$ is an interval, while $y + \frac 12 D(K \cap (y + v^m))$ is the translated interval centered at $y$. This is, for $m=1$ and $K$ convex, the operator $\bar S_v$ is the usual Steiner symmetrization operator $S_v$.
	However, if $K$ is not convex this is no longer true, as the difference body of an arbitrary set might not be an interval.
\end{remark}

\begin{definition}
	\label{def_funcsym}
	Let $f:\M \to \R$ be non-negative and $m$-quasi-concave.
	Define the (higher-order) Steiner symmetrical of $f$ as
	\[f^{(v)}(x) = \int_0^\infty \chi_{\bar S_v \{f \geq t\}}(x) d t.\]
\end{definition}

\begin{definition}
	\label{def_funcsym_convex}
	If $f:\M \to \R$ is a non-negative $m$-quasi-convex function we define
	\[f_{(v)}(x) = \int_0^\infty (1 - \chi_{\bar S_v \{f < t\}}(x)) d t.\]
\end{definition}
Clearly $f^{(v)}$ and $f_{(v)}$ are also $m$-quasi-concave and $m$-quasi-convex, respectively.

The operator $\bar S_v$ was recently used in \cite{HLPRY23,HLPRY23_2} to prove the higher-order Petty projection inequality.
In this note we study the operator $\bar S_v$ acting on sets and functions, and prove a Rogers--Brascamp--Lieb--Luttinger type inequality.
Some applications will be given.
Our main theorem reads as follows:
\begin{theorem}
	\label{thm_bll_with_linears}
	Let $k \geq 1$ and $f_i:\M \to \R, 1 \leq i \leq k$, be non-negative and $m$-quasi-concave, and let $L_i \in \M[d,m]$ with $1\leq i \leq k$ and $d \geq m$ be real matrices of rank $m$, then
	for $v \in S^{n-1}$,
	\[ \int_{\M[n,d]} \prod_{i=1}^k f_i (x L_i) d x \leq \int_{\M[n,d]} \prod_{i=1}^k f_i^{(v)} (x L_i) d x.\]
\end{theorem}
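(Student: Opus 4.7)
The plan is to fiber the integration matrix $x \in \M[n,d]$ along the direction $v$, recognize the resulting inequality on each fiber as a half--difference--body rearrangement inequality for quasi-concave functions on $\R^m$, and then reduce this by layer cake to an elementary set inequality that follows from the Brunn--Minkowski inequality in $\R^d$.

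First, decompose $x = y + v s^{T}$, with $y \in \M[n,d]$ having all columns in $v^{\perp}$ and $s \in \R^d$, so that $dx = dy\, ds$ and
\[x L_i = y L_i + v(s^{T} L_i) = z_i + v\tau_i,\]
with $z_i := y L_i \in v^{\perp m}$ and $\tau_i := s^{T} L_i \in \R^m$. For each fixed $y$, set $g_i(\tau) := f_i(z_i + v\tau)$; the $m$-quasi-concavity hypothesis on $f_i$ is exactly the quasi-concavity of each $g_i$ on $\R^m$. Unpacking Definitions \ref{def_setsym} and \ref{def_funcsym} fiberwise, the slice $\tau \mapsto f_i^{(v)}(z_i + v\tau)$ coincides with the function $g_i^{\flat}$ whose super-level sets are $\tfrac 12 D\{g_i \geq t\}$, i.e.\
\[g_i^{\flat}(\tau) = \int_0^{\infty} \chi_{\tfrac 12 D\{g_i \geq t\}}(\tau)\, dt.\]
Integrating over $y$ by Fubini, the theorem reduces to proving, for every collection of quasi-concave functions $g_i:\R^m \to \R$, the fiber inequality
\[\int_{\R^d} \prod_{i=1}^{k} g_i(s^{T} L_i)\, ds \leq \int_{\R^d} \prod_{i=1}^{k} g_i^{\flat}(s^{T} L_i)\, ds.\]

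Next, by the layer-cake formula $g_i(w) = \int_0^{\infty} \chi_{\{g_i \geq t_i\}}(w)\, dt_i$, expansion of the product, and another application of Fubini, the inequality above reduces to the set-theoretic statement
\[\vol[d]{A(K_1, \dots, K_k)} \leq \vol[d]{A(\tfrac 12 D K_1, \dots, \tfrac 12 D K_k)},\]
valid for arbitrary measurable sets $K_i \subseteq \R^m$, where
\[A(K_1, \dots, K_k) := \{s \in \R^d : s^{T} L_i \in K_i \text{ for every } i\}.\]
Writing $A := A(K_1,\dots,K_k)$, for any $s_1, s_2 \in A$ one has $\tfrac 12 (s_1 - s_2)^{T} L_i \in \tfrac 12 (K_i - K_i) = \tfrac 12 D K_i$, so $\tfrac 12 D A \subseteq A(\tfrac 12 D K_1, \dots, \tfrac 12 D K_k)$. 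The Brunn--Minkowski inequality applied to the measurable set $A$ gives $|A - A|^{1/d} \geq 2|A|^{1/d}$, i.e.\ $\vol[d]{\tfrac 12 D A} \geq \vol[d]{A}$, and monotonicity of Lebesgue measure then closes the argument.

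The only non-routine step is the identification of the fiberwise restriction of $f_i^{(v)}$ with $g_i^{\flat}$, which requires a careful reading of Definitions \ref{def_setsym} and \ref{def_funcsym}, together with the observation that $\bar S_v$ acts independently on each fiber $y + v^m$ as the centered half--difference--body operation. Once this is in hand, the remaining ingredients---Fubini, layer cake, and Brunn--Minkowski---are standard. The rank-$m$ hypothesis on the $L_i$ does not seem to enter the argument directly; it presumably ensures non-degeneracy in the intended applications.
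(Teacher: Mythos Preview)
Your argument is correct and follows the same underlying strategy as the paper---fiber along $v$, apply the layer-cake formula, and invoke Brunn--Minkowski---but the packaging differs. The paper's proof is a one-liner: apply Theorem~\ref{thm_bll_direct} (the case $L_i = \I_d$) to the $d$-quasi-concave functions $f_i \circ \overline{L_i}$ on $\M[n,d]$, and then use the commutation property~\ref{prp_func_right_commutation}, namely $(f_i \circ \overline{L_i})^{(v)} = f_i^{(v)} \circ \overline{L_i}$, to rewrite the right-hand side. That commutation property is exactly where the rank-$m$ hypothesis enters in the paper (it rests on property~\ref{prp_sym_right_submersion}). Your direct fiberwise computation of $f_i^{(v)}(yL_i + v(s^T L_i)) = g_i^\flat(s^T L_i)$ bypasses this step, which is why the rank condition never appears in your argument; your proof in fact establishes the inequality without it. The gain from the paper's modular route is reusability of the intermediate lemmas; the gain from your route is that it exposes the rank hypothesis as inessential to the inequality itself.
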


Inequality \eqref{eq_bll_original} in the quasi-concave case corresponds to the case where $m=1$ so all matrices $L_i$ are non-zero column vectors.
Theorem \ref{thm_bll_paouris_pivovarov} is a particular case of Theorem \ref{thm_bll_with_linears} where $m=d$, $L_i = \I_d, i=1, \ldots, k_1$, and the first $k_1$ functions satisfy $f_i^{(v)} = f_i, \forall v \in \s$, while the rest of the functions ($f_i$ and $\frac{d\mu}{d x_i}$) depend on only one variable.

We insist here that Theorem \ref{thm_bll_with_linears} extends the aforementioned results, only for quasi-concave functions.
If the functions $f_i$ are general measurable functions, there is the possibility that $f_i^{(v)}$ is no longer measurable. This owes to the fact that the difference body of a Lebesgue measurable set may be non-measurable.
This problem can be overcome by using a measure-theoretic definition of $D$ and $\bar S_v$, but we shall not pursue this line of research here.
For general measurable non-negative functions the inequality still holds, since Theorem \ref{thm_bll_with_linears} is proved using only the Brunn--Minkowski inequality, which is valid also in this context. But we obtain a weaker result, due to the observation in Remark \ref{rem_not_only_convex}. For this reason, and for a clearer exposition, we deal here only with convex sets and $m$-quasi-concave or $m$-quasi-convex functions.

The rest of the paper is organized as follows: In Section \ref{sec_preliminaries} we recall basic facts about convexity
and prove basic properties related to the operator $\bar S_v$ and its analogue on functions.
In Section \ref{sec_inequalities} we establish symmetrization inequalities and prove Theorem \ref{thm_bll_with_linears}.
In Section \ref{sec_applications} we provide some applications and finally in Section \ref{sec_invariant_sets} we study the sets that remain invariant under $\bar S_v$.

\section{Notation and Preliminaries}
\label{sec_preliminaries}

\subsection{Linear Structure}
We consider the vector space $\M$ of real matrices of $n$ rows and $m$ columns, with the usual Euclidean structure given by
\begin{equation}
	\label{def_inner_product}
	\langle A,B \rangle = \sum_{i=1}^n \sum_{j=1}^m A_{i,j} B_{i,j},\ \ \ \|A\|_2 = \sqrt{\sum_{i=1}^n \sum_{j=1}^m A_{i,j}^2}
\end{equation}
for $A,B \in \M$. Here $A_{i, j}$ denote $(i, j)$-th entry of $A\in \M$.

The Lebesgue measure in $\M$ is inherited from the natural identification between $\M$ and $\R^{nm}$.
We denote $\vol[n]{\ \cdot\ }$ and $\vol[nm]{\ \cdot\ }$ for the volume in $\R^n$ and $\M$ respectively.

For notational convenience we identify $\R^n$ with $\M[n,1]$, and $\R^m$ with $\M[1,m]$, unless stated otherwise (we follow this convention throughout the paper, with the exception of Section \ref{subsec_operatornorms}).
We denote the Euclidean unit sphere $\s \subseteq \R^n = \M[n,1]$, and $\mathbb{S}^{nm-1} \subseteq \M$.
Also, we identify $\M[n,a+b] = \M[n,a] \times \M[n,b]$ in the natural way.

For a set $U \subseteq \M$, $A \in \M[k,n],$ and $B \in \M[m,l]$, we write
\[A U = \{A x \in \M[k,m] : x \in U\} \ \ \mathrm{and}\ \ \ U B = \{x B \in \M[n,l] : x \in U\}.\]

For any $n\geq 1$, $GL_n \subseteq \M[n,n]$ will denote the subset of invertible matrices.

\subsection{Basic Convexity}
For a detailed account of the theory of convex bodies, we refer to Schneider's book \cite{Sh1}.
We denote the set of convex bodies in $\M$ by $\conbod$.
The support function of $K \in \conbod$ is given by
\[h_K(x) = \sup\{ \langle x,y \rangle : y \in K \}\]
for $x \in \M$, where the inner product is the one given in \eqref{def_inner_product}.
If $K$ contains the origin in the interior, the polar body $K^\circ$ defined by
\[K^\circ = \{x \in \M: h_K(x) \leq 1 \},\]
is also a convex body.

\begin{definition}
	Let $A \in \M[d,m]$ then $A$ induces a linear map
	\[\bar A : \M[n,d] \to \M[n,m]\]
	by right-multiplication, $\bar A(x) = xA$.
\end{definition}
	Observe that 
	\begin{equation}
		\label{def_functoriality}
		\overline{A B} = \bar B \circ \bar A,\  \overline{I_m} = I_{nm}.
	\end{equation}

The general Brunn-Minkowski inequality states that if $K,L \subseteq \M$ are convex bodies, then
\begin{equation}
	\label{eq_brunn_minkowsky}
	\Vol{K+L}^{\frac 1{nm}} \geq \Vol{K}^{\frac 1{nm}} + \Vol{L}^{\frac 1{nm}}
\end{equation}
where $K+L = \{x+y:x\in K, y\in L\}$ is the Minkowski sum of sets.
Equality holds in \eqref{eq_brunn_minkowsky} if and only if $K$ and $L$ are homothetic.

A convex body $K \in \conbod$ is origin-symmetric if $K = -K$, while it is symmetric with respect to a point $c \in \M$ if $K-c$ is origin-symmetric.
The difference body of $K$ is defined by $D K = K + (-K)$.
One can see analyzing the support functions of $K$ and $\frac 12 DK$, that $K = \frac 12 D K$ if and only if $K$ is origin-symmetric. If $K$ is symmetric with respect to a point, then $\frac 12 D K$ is just a translate of $K$.
By the Brunn-Minkowski inequality, $\Vol{\frac 12 D K} \geq \Vol{K}$ with equality if and only if $K$ is symmetric with respect to some point.

The mean width of $K$ is $w(K) = 2 \int_{\S} h_K(x) d\sigma(x)$ where $\sigma$ is the rotation invariant probability measure on $\S$.

The Hausdorff distance between two convex bodies $K,L$ is defined by $\|h_K - h_L\|_\infty$, where $\|\cdot\|_\infty$ denotes the supremum norm on the sphere.
For any measurable function $f:S^{nm-1} \to \R$, the $L_p$ norm will be denoted by $\|f\|_p$.

For $v \in S^{n-1}$ we denote by $R_v \in \M[n,n]$ the matrix of the reflection in $\R^n$ with respect to $v^\perp$.
This is, $R_v(w + \lambda v) = w - \lambda v$ for every $w \in v^\perp$ and $\lambda \in \R$.
Notice that also $R_v(x+v t) = x-v t$ for every $x \in v^{\perp m}$ and $t \in \M[1,m]$.

\subsection{Symmetrization}

Here we establish basic properties of the fiber symmetrization in convex sets and $m$-quasi-concave functions.
Some of these results appeared already in \cite{HLPRY23,HLPRY23_2}.

	\begin{proposition}
		\label{prop_set_properties}
	The operator $\bar S_v$ satisfies the following properties:
		\begin{enumerate}[
					label=\ref{prop_set_properties}-\arabic*,
					ref=\ref{prop_set_properties}-\arabic*
				]
	\item
	\label{prp_symincreases_volume}
	If $K$ is a convex body,
		$\Vol{\bar S_v K} \geq \Vol{K}$ with equality if and only if $K \cap (y+v^m)$ is symmetric with respect to a point possibly depending on $y$, for almost every $y \in v^{\perp m}$.
	\item
	\label{prp_sym_is_steiner}
				If $K \subseteq \M[n,1]$ is convex, then $\bar S_v K = S_v K$ is the usual Steiner symmetrization. In particular it preserves $n$-dimensional volume.
\item
	\label{prp_sym_is_difference}
	If $K \subseteq \M[1,m]$ and $v \in S^0 = \{-1,+1\}$, then $\bar S_{\pm 1} K = \frac 12D K$.

\item
	\label{prp_sym_monotonicity}
	If $K \subseteq L$ then
	$\bar S_v K \subseteq \bar S_v L$.
\item
	\label{prp_sym_preserves_convexity}
	If $K$ is a convex body, then so is $\bar S_v K$.
\item
	\label{prp_sym_left_commutation}
	Let $A \in \M[n,n]$ be an orthogonal matrix and $K \in \conbod, v \in \s$, then 
	\[\bar S_{A v} (A K) = A \bar S_v K.\]
\item 
	\label{prp_sym_right_commutation}
	If $K \subseteq \M[n,d]$ is a convex set and $A \in \M[d,m]$ is any matrix, then we have
	\[(\bar S_v K) A \subseteq \bar S_v (K A) .\]
	Moreover, if the rank of $A$ is $d$ (we implicitly assume $m \geq d$), then there is equality.

\item
		\label{prp_sym_product}
	Assume $r\geq 1$ and we have convex bodies $K_i \in \conbod[n,m_i]$ with $1 \leq i \leq r$, $m_i \in \N$, $m_1+\cdots +m_r = m$, and $K_1 \times \cdots \times K_r \subseteq \M$, then
	\begin{equation}
		\bar S_v (K_1 \times \cdots \times K_r) = \bar S_v K_1 \times \cdots \times \bar S_v K_r.
	\end{equation}
\item
	\label{prp_sym_right_submersion}
	If $K \in \conbod[n,m]$ and $A \in \M[d,m]$ is a rank $m$ matrix (we implicitly assume $d \geq m$), then
	\[\bar A^{-1}( \bar S_v K ) = \bar S_v \bar A^{-1} (K)\]
	where $\bar A^{-1}$ denotes the preimage by the function $\bar A$.
	\end{enumerate}
\end{proposition}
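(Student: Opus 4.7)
The plan is to verify each of the nine parts directly from the parametrization \eqref{eq_sym_def} of $\bar S_v K$. For \ref{prp_symincreases_volume} I would decompose $\M = v^{\perp m} \oplus v^m$ and apply Fubini: by construction, each fiber $\bar S_v K \cap (y+v^m)$ equals $y + \frac 12 D(K \cap (y+v^m))$, so the fiberwise Brunn--Minkowski inequality \eqref{eq_brunn_minkowsky} yields $|\tfrac 12 D(K \cap (y+v^m))|_m \geq |K \cap (y+v^m)|_m$, with equality iff the slice is symmetric about a point; integrating in $y$ gives both the volume inequality and its equality characterization. The specializations \ref{prp_sym_is_steiner} and \ref{prp_sym_is_difference} are immediate from the definition: when $m=1$, a convex slice is a segment whose $\tfrac 12 D$ is the centered segment of equal length; when $n=1$, the subspace $v^{\perp m}$ reduces to $\{0\}$.

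Monotonicity \ref{prp_sym_monotonicity} is immediate from \eqref{eq_sym_def}. For \ref{prp_sym_preserves_convexity}, given $p_i = y_i + v(t_i - s_i)/2 \in \bar S_v K$ with $y_i + v t_i, y_i + v s_i \in K$, I would check that $\lambda p_1 + (1-\lambda)p_2$ is parametrized by the obvious convex combinations of the $y_i, t_i, s_i$, and the associated points $y + vt$ and $y + vs$ lie in $K$ by convexity. Boundedness follows from the identity $y + v(t-s)/2 = \tfrac 12(y+vt) + \tfrac 12(y-vs)$ combined with $\|y - vs\| = \|R_v(y+vs)\| = \|y+vs\|$; closedness comes from extracting convergent subsequences of $(y_n, t_n, s_n)$ using compactness of $K$; and the non-empty interior is inherited because an open Euclidean ball inside $K$ forces $\bar S_v K$ to contain a product of open balls in $v^{\perp m}$ and $v^m$.

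The commutation identities \ref{prp_sym_left_commutation}--\ref{prp_sym_right_submersion} all follow by rewriting \eqref{eq_sym_def} using the elementary identities $A v^{\perp m} = (Av)^{\perp m}$ and $A(vt) = (Av)t$ for orthogonal $A$; the identity $(y+vt)B = yB + v(tB)$ and the fact that right multiplication preserves the column-wise orthogonality $y \in v^{\perp d} \Rightarrow yB \in v^{\perp m}$ for any $B$; and the splitting of $v^{\perp m}$ and $v^m$ along the product decomposition of $\M$. These yield \ref{prp_sym_left_commutation}, \ref{prp_sym_product}, and the easy inclusions in \ref{prp_sym_right_commutation} and \ref{prp_sym_right_submersion} directly.

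The main obstacle is the \emph{reverse} inclusions in \ref{prp_sym_right_commutation} and \ref{prp_sym_right_submersion}, where one must lift a representation through $\bar A$. For \ref{prp_sym_right_commutation}, given $q = y' + v(t'-s')/2 \in \bar S_v(KA)$ with $y' + vt' = k_1 A$ and $y' + vs' = k_2 A$, one decomposes $k_i = y_i + v u_i$; since $A$ has rank $d$ (full row rank), the map $\bar A$ is injective, so the equations $y_1 A = y' = y_2 A$ force $y_1 = y_2 =: y$, and $y + v(u_1 - u_2)/2$ is the needed preimage. For \ref{prp_sym_right_submersion}, given $p = y + vu \in \bar A^{-1}(\bar S_v K)$ with $pA = y' + v(t'-s')/2$ and $y' + vt', y' + vs' \in K$, the hypothesis $\mathrm{rank}(A) = m$ (full column rank) makes $r \mapsto rA$ surjective, so one may solve $tA = t'$ for some $t \in \R^d$ and set $s = t - 2u$ to obtain $sA = s'$, producing the required point $y + v(t-s)/2 \in \bar S_v \bar A^{-1}(K)$ lying above $p$.
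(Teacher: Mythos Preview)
Your proposal is correct and, for most parts, matches the paper's proof essentially verbatim: \ref{prp_symincreases_volume} via Fubini and fiberwise Brunn--Minkowski, \ref{prp_sym_is_steiner}--\ref{prp_sym_monotonicity} and \ref{prp_sym_left_commutation}--\ref{prp_sym_product} directly from \eqref{eq_sym_def}, and \ref{prp_sym_right_commutation} by the same injectivity argument.

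Two places differ. For \ref{prp_sym_preserves_convexity} the paper simply cites \cite[Theorem~2.3]{McM99}, whereas you supply a self-contained verification of convexity, compactness, and non-empty interior; your argument is correct and has the advantage of not relying on an external reference. For \ref{prp_sym_right_submersion} the paper takes an indirect route: it first checks the special case $A = T_{d,m} = \left(\begin{smallmatrix}\I_m\\0\end{smallmatrix}\right)$ using \ref{prp_sym_product}, then handles the general case by factoring $A = \varphi T_{d,m}\psi$ with $\varphi,\psi$ invertible and invoking \ref{prp_sym_right_commutation}. Your direct lifting argument---using surjectivity of $r\mapsto rA$ (which follows from $\operatorname{rank} A = m$) to solve $tA = t'$ and then setting $s = t - 2u$---is correct and more transparent; it avoids the factorization entirely. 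Both approaches work, but yours is the shorter path.
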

\begin{proof}
	\ 

\medskip\ref{prp_symincreases_volume}
If $L \subseteq\R^m$ is a convex body, by the Brunn-Minkowski inequality, $\vol[m]{\frac 12D L} \geq \vol[m]{L}$ with equality if and only if $L$ is symmetric with respect to some point.
Then by Fubini, $\Vol{\bar S_v K} \geq \Vol{K}$ with equality if and only if $K \cap (y+v^m)$ is symmetric with respect to a point possibly depending on $y$, for almost every $y \in v^{\perp m}$.

\medskip\ref{prp_sym_is_steiner}
	If $m = 1$ and $K \subseteq \R^n$ is convex, then $K \cap (x+v^m)$ is a one dimensional interval, and $\frac 12D\left(K \cap (x+v^m)\right)$ is the same interval centered at the origin.

\medskip\ref{prp_sym_is_difference}
	This is clear from the definition.

\medskip\ref{prp_sym_monotonicity}
	This follows from the fact that $D$ is a monotone operator.

\medskip\ref{prp_sym_preserves_convexity}
	This fact was proven in \cite[Theorem 2.3]{McM99}.

\medskip\ref{prp_sym_left_commutation}
	This is clear from the definition.

\medskip\ref{prp_sym_right_commutation}
	Let $z \in (\bar S_v K)A$. By formula \eqref{eq_sym_def},
	\[z = (x+v\frac{t-s}2)A,\text{ with } x+v t, x+v s \in K, \text{ and } v^tx=0.\]
	Clearly $xA+vtA, xA+vsA \in K A$ and $v^t x A = 0$, which implies that
	\[z = xA+v\frac{tA-sA}2 \in \bar S_v(K A).\]

	Conversely (and assuming $A$ has rank $d$), let $z \in \bar S_v(K A)$, then
	\[z = x+v\frac{t-s}2\]
	with $t, s \in \M[1,m]$, $x \in \M[n,m]$, $x+v t, x+v s \in K A$ and $v^t x=0$.

	We can write
	\begin{align}
		x+v t &= \tilde x_1 A + v\tilde t A\\
		x+v s &= \tilde x_2 A + v\tilde s A
	\end{align}
			with $\tilde t, \tilde s \in \M[1,d]$, $\tilde x_1, \tilde x_2 \in \M[n,d]$, $\tilde x_1 + v \tilde t, \tilde x_2 + v \tilde s \in K$ and $v^t  \tilde x_1 = v^t  \tilde x_2 = 0$.
	But since $v^t \tilde x_1 A = v^t \tilde x_2 A = 0$, by uniqueness of the decomposition $\M = v^{m} \oplus v^{\perp m}$, we must have
	$t=\tilde t A, s=\tilde s A$ and $\tilde x_1 A = x = \tilde x_2 A$.
	Now using that $A$ has rank $d$, from $\tilde x_1 A = \tilde x_2 A$ we get $\tilde x_1 = \tilde x_2$.

	Finally, we obtain
	\[z = (\tilde x_1 +v \frac{\tilde t - \tilde s}2) A\]
	with $\tilde x_1 + v \tilde t, \tilde x_1 + v \tilde s \in K, v^t  \tilde x_1 = 0$ and we conclude that $z \in (\bar S_v K) A$.

\medskip\ref{prp_sym_product}
	By formula \eqref{eq_sym_def}, $ \bar S_v (K_1 \times \cdots \times K_r)$ is formed by the points 
	\[(x_1, \ldots, x_r) + v \frac{(s_1, \ldots, s_r)-(t_1, \ldots, t_r)}2 = (x_1 + v \frac{t_1-s_1}2, \cdots, x_r + v \frac{t_r-s_r}2 )\]
	where $x_i + v s_i, x_i + v t_i \in K_i$. Then
			\begin{align}
				\bar S_v (K_1 \times \cdots \times K_r)
				&= \left\{(x_i + v \frac{t_i-s_i}2)_i: x_i + v s_i, x_i + v t_i \in K_i \right\} \\
				&= \bar S_v K_1 \times \cdots \times \bar S_v K_r.
			\end{align}

\medskip\ref{prp_sym_right_submersion}
	Consider the $d \times m$ matrix
	\[T_{d,m} = \left( \begin{array}{c} \I_m \\ 0_{d-m,m}\end{array} \right)\]
	so that $\overline{T_{d,m}}(x_1, \ldots, x_d) = (x_1, \ldots, x_m)$ for $x_i \in \R^n$.
	The proposition is true replacing $A$ by $T_{d,m}$, by property \ref{prp_sym_product} with $r=2, m_1 = m, m_2 = m-d, K_1 = K, K_2 = \R^{d-m}$.

	If $d=m$, the matrix $A$ must be invertible and the proposition is also true by property \ref{prp_sym_right_commutation}.

	For general $A$, we use that $A$ can be decomposed as $A = \varphi T_{d,m} \psi$ where $\varphi \in GL_m, \psi \in GL_d$.
	We conclude by equation \eqref{def_functoriality} and properties \ref{prp_sym_product} and \ref{prp_sym_right_commutation} again.
\end{proof}

The following properties of the symmetrization on functions are obtained immediately.

\begin{proposition}
	\label{prop_func_properties}
\ 
	\begin{enumerate}[ label=\ref{prop_func_properties}-\arabic* ]
\item
	\label{prp_steiner_concave_is_invariant}
	 An $m$-quasi-concave function $f:\M \to \R$ is Steiner-concave if, and only if, $f^{(v)} = f$ for every $v \in S^{n-1}$.

\item
	\label{prp_func_partial}
	If $g:\R^n \to \R$ is non-negative and quasi-concave, then the function $f:\M \to \R$ defined by $f(x_1, \cdots, x_m) = g(x_i)$ is $m$-quasi-concave, and
	\[f^{(v)}(x_1, \cdots, x_m) = g^{(v)}(x_i).\]
\item
	\label{prp_func_onedimensional}
	For the case $n=1$, recall that $S^0 = \{-1,+1\}$.
	If $K \subseteq \R^m = \M[1,m]$, 
	\[f^{(\pm 1)}(x) = \int_0^\infty \chi_{\frac 12D \{f \geq t\}}(x) d t.\]
\item
	\label{prp_func_right_commutation}
	If $A \in \M[d,m]$ is a rank $m$ matrix (we implicitly assume $d \geq m$), then
	\[(f \circ \bar A)^{(v)} = f^{(v)} \circ \bar A.\]
\end{enumerate}
\end{proposition}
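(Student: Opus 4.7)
The plan for all four parts is to reduce each claim about $f^{(v)}$ to a claim about $\bar S_v$ applied to the super-level sets, using the layer-cake definition in Definition \ref{def_funcsym} together with the set-level facts already established in Proposition \ref{prop_set_properties}.

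For \ref{prp_steiner_concave_is_invariant}, I would invoke the identification (made explicit just after Definition \ref{def_quasiconcave}) of Steiner-concavity of an $m$-quasi-concave $f$ with the condition $\bar S_v\{f \geq t\} = \{f \geq t\}$ for every $t > 0$ and $v \in \s$. The forward direction is then immediate: $f^{(v)}(x) = \int_0^\infty \chi_{\{f \geq t\}}(x)\, dt = f(x)$. For the converse, the monotonicity in \ref{prp_sym_monotonicity} makes $t \mapsto \bar S_v\{f \geq t\}$ a decreasing family, so $f^{(v)}(x) = \sup\{t \geq 0 : x \in \bar S_v\{f \geq t\}\}$; comparing super-level sets on both sides of $f^{(v)} = f$ then forces $\bar S_v\{f \geq t\} = \{f \geq t\}$, with \ref{prp_sym_preserves_convexity} reducing boundary questions to closure of convex sets. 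Part \ref{prp_func_onedimensional} is an immediate specialization of the definition: for $n = 1$ one has $v^{\perp m} = \{0\}$ and $v^m = \R^m$, so \ref{prp_sym_is_difference} gives $\bar S_{\pm 1}\{f \geq t\} = \frac{1}{2} D\{f \geq t\}$, and Definition \ref{def_funcsym} yields the stated formula.

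For \ref{prp_func_partial}, $m$-quasi-concavity of $f$ is direct: for $y \in v^{\perp m}$ one has $f(y + vt) = g(y_i + t_i v)$, which depends only on $t_i$ and is quasi-concave in $t_i$ since quasi-concave functions restrict to quasi-concave functions on lines; its super-level sets in $\M[1,m]$ are convex cylinders. For the formula, write the super-level set as the product $\R^n \times \cdots \times \{g \geq t\} \times \cdots \times \R^n$ (non-trivial factor in position $i$) and apply \ref{prp_sym_product} together with \ref{prp_sym_is_steiner} on the $i$-th factor, noting that $\bar S_v \R^n = \R^n$ follows directly from \eqref{eq_sym_def}. Integrating in $t$ gives $f^{(v)}(x_1, \ldots, x_m) = g^{(v)}(x_i)$. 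For \ref{prp_func_right_commutation}, the super-level sets satisfy $\{f \circ \bar A \geq t\} = \bar A^{-1}\{f \geq t\}$, so \ref{prp_sym_right_submersion} yields $\bar S_v \bar A^{-1}\{f \geq t\} = \bar A^{-1}(\bar S_v\{f \geq t\})$, and layer cake then delivers
\[(f \circ \bar A)^{(v)}(x) = \int_0^\infty \chi_{\bar S_v\{f \geq t\}}(\bar A(x))\, dt = (f^{(v)} \circ \bar A)(x).\]
The fact that $f \circ \bar A$ is itself $d$-quasi-concave (needed for the left-hand side to be defined) is straightforward: for $y \in v^{\perp d}$ the map $t \in \M[1,d] \mapsto f(yA + v(tA))$ is the composition of the linear map $t \mapsto tA$ with the quasi-concave function $s \mapsto f(yA + vs)$, hence quasi-concave.

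The main obstacle is really only the converse implication in \ref{prp_steiner_concave_is_invariant}: turning the functional equality $f^{(v)} = f$ into the set equality $\bar S_v\{f \geq t\} = \{f \geq t\}$ at \emph{every} $t$ rather than just almost every $t$. Everything else amounts to bookkeeping with \eqref{eq_sym_def} and the monotonicity, product, and right-submersion properties from Proposition \ref{prop_set_properties}.
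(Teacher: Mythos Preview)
Your proposal is correct and matches the paper's approach: the paper does not give a proof at all, merely stating that the properties ``are obtained immediately'' from Definition~\ref{def_funcsym} and Proposition~\ref{prop_set_properties}, which is precisely the reduction via layer cake to the set-level facts \ref{prp_sym_is_steiner}, \ref{prp_sym_is_difference}, \ref{prp_sym_product}, and \ref{prp_sym_right_submersion} that you spell out. Your sketch in fact supplies more detail than the paper (in particular your remark on the converse in \ref{prp_steiner_concave_is_invariant} flags a point the paper glosses over entirely).
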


The measure-preserving property of the usual Steiner symmetrization is a useful property that is lost for the operator $\bar S_v$.
Nevertheless, $\bar S_v$ still preserves some weaker measures on convex bodies like the mean width and the volumes of some projections.

\begin{proposition}
	\label{prop_meanwidth}
	Let $K \subseteq \M$ be a convex body with the origin in the interior.
	For any $A \in GL_m$, $v \in S^{n-1}$ and $p \geq 1$,
	\[ \int_{\s} h_{(\bar S_v K) A}(z)^p d z \leq \int_{\s} h_{K A}(z)^p d z. \]
	If $p>1$ there is equality if and only if $R_v K = K$.
\end{proposition}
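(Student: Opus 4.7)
The plan is to derive the pointwise support-function bound
\[
h_{\bar S_v K}(w) \leq \tfrac{1}{2}\bigl(h_K(w) + h_K(R_v w)\bigr),\quad w \in \M,
\]
and then obtain the proposition by substituting $w = zA^t$, raising to the $p$-th power, integrating, and exploiting the orthogonality of $R_v$.

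To establish the pointwise bound, I would exploit the decomposition $\M = v^m \oplus v^{\perp m}$. For each $y \in v^{\perp m}$ set $K_y = \{t \in \R^m : y + vt \in K\}$, so that by Definition~\ref{def_setsym} the $y$-fiber of $\bar S_v K$ is $\tfrac12 D K_y$. Writing $b = w^t v \in \R^m$, a short calculation gives $\langle w, y + vt\rangle = \langle w, y\rangle + b\cdot t$ and $\langle R_v w, y + vt\rangle = \langle w, y\rangle - b\cdot t$, since $R_v$ fixes $v^{\perp m}$ pointwise and sends $v$ to $-v$. Taking suprema and using $h_{DK_y}(b) = h_{K_y}(b) + h_{K_y}(-b)$ yields
\begin{align*}
h_K(w) &= \sup_y\bigl(\langle w, y\rangle + h_{K_y}(b)\bigr),\\
h_K(R_v w) &= \sup_y\bigl(\langle w, y\rangle + h_{K_y}(-b)\bigr),\\
h_{\bar S_v K}(w) &= \sup_y\bigl(\langle w, y\rangle + \tfrac12 h_{DK_y}(b)\bigr).
\end{align*}
The desired bound follows by doubling the third line and applying $\sup_y(f+g) \leq \sup_y f + \sup_y g$ on the right.

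With this in hand, setting $w = zA^t$ and using $R_v(zA^t) = (R_v z)A^t$ (valid because $R_v$ acts columnwise on $\M$), I get
\[
h_{(\bar S_v K)A}(z) \leq \tfrac12\bigl(h_{KA}(z) + h_{KA}(R_v z)\bigr).
\]
Raising to the $p$-th power and invoking convexity of $t \mapsto t^p$ on $[0,\infty)$ for $p \geq 1$ gives $h_{(\bar S_v K)A}(z)^p \leq \tfrac12\bigl(h_{KA}(z)^p + h_{KA}(R_v z)^p\bigr)$. Integrating over $\s$ and then changing variables $z \mapsto R_v z$ in the second term, which preserves the rotation-invariant measure since $R_v$ is an orthogonal map of $\R^n$, closes the integral inequality.

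For equality when $p > 1$, strict convexity of $t^p$ on $[0,\infty)$ forces $h_{KA}(z) = h_{KA}(R_v z)$ almost everywhere on $\s$, hence everywhere by continuity of support functions; this gives $(R_v K)A = KA$, and invertibility of $A \in GL_m$ yields $R_v K = K$. The converse is direct, since $R_v K = K$ makes every fiber $K_y$ origin-symmetric, so $\bar S_v K = K$ and equality is automatic. The entire difficulty is concentrated in the pointwise inequality; the only subtlety is recognizing that the coupling, which forces both $y + vt$ and $y + vs$ to share the same $y$ in the definition of $\bar S_v$, is precisely what produces the factor $\tfrac12$ in front of $h_{DK_y}(b)$, in contrast to the uncoupled suprema appearing in $h_K(w) + h_K(R_v w)$.
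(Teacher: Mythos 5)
Your proof is correct, and its geometric core --- the pointwise bound $h_{\bar S_v K}(w) \leq \tfrac12\bigl(h_K(w) + h_K(R_v w)\bigr)$ --- is exactly the key inequality in the paper's proof, though you derive it by decomposing the support function along the fibers $K_y$ and using $h_{DK_y} = h_{K_y}(\cdot) + h_{K_y}(-\cdot)$, whereas the paper gets it in one line from the representation \eqref{eq_sym_def} by writing $\langle x + v\tfrac{t-s}{2}, z\rangle = \tfrac12(\langle x+vt, z\rangle + \langle R_v(x+vs), z\rangle)$. The two arguments diverge in how they pass from the pointwise bound to the $L^p$ statement: the paper reduces to $A=I_m$ via property \ref{prp_sym_right_commutation} and applies Minkowski's triangle inequality $\|h_{\bar S_v K}\|_p \leq \tfrac12(\|h_K\|_p + \|h_{R_v K}\|_p)$, so its equality analysis goes through the equality condition of Minkowski's inequality (proportionality $h_K = \lambda h_{R_v K}$ a.e.) followed by an integration step to pin down $\lambda = 1$; you instead use pointwise convexity of $t \mapsto t^p$ together with the measure-preserving change of variables $z \mapsto R_v z$, so that strict convexity for $p>1$ delivers $h_{KA}(z) = h_{KA}(R_v z)$ a.e.\ directly, with no normalization step. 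Your finish is arguably cleaner for the equality case, at the cost of a slightly longer derivation of the pointwise bound; both are complete and correct.
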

\begin{proof}
	By property \ref{prp_sym_right_commutation} it suffices to consider the case $A = I_m$.

	Recall that $R_v \in \M[n,n]$ is the matrix of the reflection in $\R^n$ with respect to $v^\perp$.

	Let $x \in v^{\perp m}$, $x + v t, x + v s \in K$ and $z \in \M$. Then
	\[\langle x + v \frac{t-s}2, z\rangle = \frac 12(\langle x+vt,z \rangle + \langle x-vs,z \rangle) \leq \frac 12( h_K(z) + h_{K}(R_v z) )\]
	Taking supremum for all $x + v t, x + v s \in K$ and using \eqref{eq_sym_def}, we get $h_{\bar S_v K}(z) \leq \frac 12(h_K(z) + h_{R_v K}(z))$.
	Since $R_v \S = \S$, we obtain
	\[\|h_{\bar S_v K}\|_p \leq \frac 12( \|h_{K}\|_p+\|h_{R_v K}\|_p ) = \|h_{K}\|_p\]
	which proves the inequality.

	For the equality case, notice that if $\|h_{\bar S_v K}\|_p =\|h_{K}\|_p$, then
	\[\frac 12 \|h_{K} + h_{R_v K}\|_p \geq \|h_{\bar S_v K}\|_p =\|h_{K}\|_p = \frac 12( \|h_{K}\|_p+\|h_{R_v K}\|_p ),\]
	which for $p>1$ implies that that there exists $\lambda \in \R$ with $h_K(z) = \lambda h_{R_v K}(z)$ for almost every $z$.
	Integrating on $z \in \S$ on both sides we get $w(K) = \lambda w(R_v K)$ which implies that $\lambda=1$.
	
	Thus we obtain $h_K(z) = h_{R_v K}(z)$ for almost every $z \in \S$. By continuity, this occurs for every $z \in \S$ and we get $K = R_v K$.
\end{proof}

For $e \in \M[m,1] \setminus \{0\}$ the function $\bar e:\M \to \R^n$ is a linear projection onto an $n$-dimensional space.
In particular, if $e_i$ is the $i$-th canonical vector, $\overline{e_i} = \pi_i:\M \to \R^n$ is the projection onto the $i$-th column, this is, $\pi_i(x)$ is the $i$-th column of $x$.
\begin{proposition}
	For a convex body $K \subseteq \M$ and $e \in \M[m,1]$,
	\[
		\vol{\bar e (\bar S_v K)} \leq \vol{\bar e(K)}.
	\]
	In particular,
	\[
		\vol{\pi_i(\bar S_v K)} \leq \vol{\pi_i(K)}.
	\]
\end{proposition}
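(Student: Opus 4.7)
The plan is to reduce everything to properties already established in Proposition \ref{prop_set_properties}, namely the right-multiplication semi-commutation (\ref{prp_sym_right_commutation}) and the identification of $\bar S_v$ with ordinary Steiner symmetrization on subsets of $\R^n$ (\ref{prp_sym_is_steiner}).

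First I observe that $\bar e(K) = K e$ by definition, so the claim is $\vol{(\bar S_v K) e} \leq \vol{K e}$. I apply property \ref{prp_sym_right_commutation} with the matrix $A = e \in \M[m,1]$, which yields the inclusion
\[ (\bar S_v K) e \subseteq \bar S_v(K e). \]
(I do not need the equality part of \ref{prp_sym_right_commutation}, so no rank assumption on $e$ is required; the case $e=0$ is trivial.)

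Next I use that $K e$ is a convex subset of $\M[n,1] = \R^n$, since it is the image of the convex body $K$ under the linear map $\bar e$. By property \ref{prp_sym_is_steiner}, the higher-order symmetrization coincides on such sets with the classical Steiner symmetrization, and preserves $n$-dimensional volume:
\[ \vol{\bar S_v(K e)} = \vol{S_v(K e)} = \vol{K e}. \]
Combining monotonicity of Lebesgue measure with the inclusion above gives
\[ \vol{\bar e(\bar S_v K)} = \vol{(\bar S_v K) e} \leq \vol{\bar S_v(K e)} = \vol{K e} = \vol{\bar e(K)}, \]
which is the desired inequality. The case $e = e_i$ specializes to the statement about the column projections $\pi_i$.

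There is no real obstacle here: the statement is essentially a corollary of the two properties cited, once one notices that the target $\R^n$ places us in the setting where $\bar S_v$ agrees with the classical (volume-preserving) Steiner symmetrization. The only mild subtlety is not to ask for rank $m$ on the $m \times 1$ matrix $e$, since we only use the inclusion direction of \ref{prp_sym_right_commutation}.
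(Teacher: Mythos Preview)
Your proof is correct and follows exactly the same approach as the paper: apply the inclusion $(\bar S_v K)e \subseteq \bar S_v(Ke)$ from property \ref{prp_sym_right_commutation}, then use \ref{prp_sym_is_steiner} to identify $\bar S_v(Ke)$ with the volume-preserving Steiner symmetrization $S_v(Ke)$ in $\R^n$. Your added remark that only the inclusion (not the rank condition) is needed is a nice clarification.
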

\begin{proof}
	By the properties \ref{prp_sym_right_commutation} and \ref{prp_sym_is_steiner}, we have
	\[\vol{\bar e(\bar S_v K)} =\vol{(\bar S_v K) e} \leq \vol{S_v (K e)} = \vol{K e} = \vol{\bar e(K)}\]
\end{proof}

\section{Rogers--Brascamp--Lieb--Luttinger Inequalities}
\label{sec_inequalities}
We start with an inequality of convex sets which is the geometric core of Theorem \ref{thm_bll_with_linears}.

\begin{theorem}
	\label{thm_intersets}
	Let $k \geq 1$ and $K_i \subseteq \M[1,m] = \R^m, 1 \leq i \leq k$, be convex sets, (not necessarily convex bodies, possibly with infinite volume), then
	\[\vol[m]{\bigcap_{i=1}^k K_i} \leq \vol[m]{\bigcap_{i=1}^k \frac 12D K_i}.\]
	If the left-hand side has infinite volume, then so has the right-hand side.

	Moreover, if $K,L \subseteq \M[1,m] = \R^m$ are convex sets, and $K$ is origin-symmetric and has finite volume,
	\[\vol[m]{K \setminus L} \geq \vol[m]{K \setminus \frac 12D L}.\]
\end{theorem}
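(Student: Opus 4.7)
The plan is to derive both statements from a single set-theoretic inclusion,
\[
  \tfrac12 D\Bigl(\bigcap_{i=1}^k K_i\Bigr) \subseteq \bigcap_{i=1}^k \tfrac12 D K_i,
\]
combined with the Brunn--Minkowski consequence $\vol[m]{\tfrac12 D K} \geq \vol[m]{K}$, valid for every convex set $K$ of finite volume (this follows from \eqref{eq_brunn_minkowsky} applied to $\tfrac12 K + \tfrac12(-K)$).

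For the first statement, I would verify the inclusion directly: any $z = \tfrac12(x-y)$ with $x,y \in \bigcap_i K_i$ lies in $\tfrac12 D K_i$ for every $i$. When $\bigcap_i K_i$ has finite volume, the Brunn--Minkowski consequence applied to it gives $\vol[m]{\tfrac12 D \bigl(\bigcap_i K_i\bigr)} \geq \vol[m]{\bigcap_i K_i}$, and the inclusion yields the desired inequality. To handle the infinite-volume case, I would truncate by a large ball $R\B[m]$, apply the finite-volume case to the sets $K_i \cap R\B[m]$, use the monotonicity $\tfrac12 D(K_i \cap R\B[m]) \subseteq \tfrac12 D K_i$ to pass back to the original sets on the right, and let $R \to \infty$ via monotone convergence.

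For the second statement, the observation to exploit is that an origin-symmetric convex set $K$ satisfies $\tfrac12 D K = K$, since $K = -K$ and $K + K = 2K$ by convexity. Because $\vol[m]{K} < \infty$, subtracting volumes converts the desired inequality into
\[
  \vol[m]{K \cap L} \leq \vol[m]{K \cap \tfrac12 D L},
\]
which is precisely the first statement applied with $k = 2$, $K_1 = K$, $K_2 = L$, using $\tfrac12 D K_1 = K$.

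The only part I expect to require genuine care is the infinite-volume case of the first statement; once the truncation argument is in place, both statements reduce to one-line manipulations of the inclusion above.
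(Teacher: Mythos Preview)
Your proposal is correct and follows essentially the same route as the paper: the same inclusion $\tfrac12 D(\bigcap_i K_i)\subseteq \bigcap_i \tfrac12 D K_i$ followed by the Brunn--Minkowski bound $\vol[m]{\tfrac12 DK}\geq\vol[m]{K}$, and then the reduction of the second statement to the first via $\tfrac12 DK=K$ for origin-symmetric $K$. The only difference is that the paper applies Brunn--Minkowski directly (which already covers the infinite-volume case), whereas you insert an explicit ball-truncation step; this is extra care rather than a different idea.
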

\begin{proof}
	Let $x \in \frac 12D(\bigcap_{i=1}^k K_i)$, then $x = \frac {a-b}2$ with $a,b \in \bigcap_{i=1}^k K_i$.
	In particular, $x \in \frac 12D K_i$ for every $i$. This proves that
	\[\frac 12D \left(\bigcap_{i=1}^k K_i\right) \subseteq \bigcap_{i=1}^k\frac 12D K_i.\]
	By the Brunn-Minkowski inequality,
	\[\vol[m]{\bigcap_{i=1}^k K_i} \leq \vol[m]{\frac 12D \bigcap_{i=1}^k K_i} \leq \vol[m]{\bigcap_{i=1}^k\frac 12D K_i}\]
	and the first part of the theorem follows.

	For the second part use that $K = \frac 12 DK$ to obtain
	\begin{align}
		\vol[m]{K \setminus L}
		&= \vol[m] K - \vol[m]{K \cap L} \\
		&\geq \vol[m] K - \vol[m]{\frac 12D K \cap \frac 12D L} \\
		&= \vol[m] K - \vol[m]{K \cap \frac 12D L} \\
		&= \vol[m]{K \setminus \frac 12D L}. \\
	\end{align}
\end{proof}

We remark here that Theorem \ref{thm_intersets} is neither stronger nor weaker than the inequality $\vol[m]{\bigcap_{i=1}^k K_i} \leq \max_{i} \{\vol[m]{K_i}\}$ which is obtained applying inequality \eqref{eq_bll_original} with the symmetric decreasing rearrangement.
For example, if $K_1, K_2$ are two origin-symmetric convex bodies of volume $1$ with very small intersection, then Theorem \ref{thm_intersets} gives an equality, while the inequality $\vol[m]{K_1 \cap K_2} < 1$ is strict.

\begin{theorem}
	\label{thm_symsets}
	Let $k \geq 1$ and $K_i \subseteq \M, 1 \leq i \leq k$ be convex sets (not necessarily convex bodies), and $v \in S^{n-1}$, then
	\[\Vol{\bigcap_{i=1}^k K_i} \leq \Vol{\bigcap_{i=1}^k \bar S_v K_i}.\]
	If the left-hand side has infinite volume, then so has the right-hand side.
	Moreover, if $K$ satisfies $\bar S_v K = K$ and has finite volume,
	\[\Vol{K \setminus L} \geq \Vol{K \setminus \bar S_v L}.\]
\end{theorem}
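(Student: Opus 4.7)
The plan is to slice along the affine translates of $v^m$ and reduce everything to the already-established $\R^m$ version, Theorem \ref{thm_intersets}. For $y \in v^{\perp m}$ and any $E \subseteq \M$, let
\[E_y := \{t \in \M[1,m] : y + v t \in E\} \subseteq \R^m,\]
which gives a natural identification of the fiber $E \cap (y + v^m)$ with a subset of $\R^m$. Two observations are immediate: intersections commute with slicing, $\bigl(\bigcap_i K_i\bigr)_y = \bigcap_i (K_i)_y$; and by Definition \ref{def_setsym}, $(\bar S_v E)_y = \tfrac 12 D E_y$. If each $K_i$ is convex then so is every slice $(K_i)_y$ (possibly unbounded, possibly empty).

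Fix $y \in v^{\perp m}$. Applying Theorem \ref{thm_intersets} to the convex sets $(K_1)_y, \ldots, (K_k)_y \subseteq \R^m$ gives
\[\vol[m]{\Bigl(\bigcap_{i=1}^k K_i\Bigr)_y} \leq \vol[m]{\Bigl(\bigcap_{i=1}^k \bar S_v K_i\Bigr)_y},\]
with the right-hand slice having infinite $m$-volume whenever the left-hand slice does. Integrating in $y \in v^{\perp m}$ and invoking Fubini's theorem on $\M = v^m \oplus v^{\perp m}$ yields the first inequality, including the claim about infinite volume.

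For the second part, $\bar S_v K = K$ translates slicewise into $\tfrac 12 D K_y = K_y$ for a.e.\ $y$, so each such slice $K_y$ is origin-symmetric in $\R^m$; and finiteness of $\Vol{K}$ implies $\vol[m]{K_y} < \infty$ for a.e.\ $y$, again by Fubini. The second part of Theorem \ref{thm_intersets} applied to $K_y$ and $L_y$ gives
\[\vol[m]{K_y \setminus L_y} \geq \vol[m]{K_y \setminus \tfrac 12 D L_y} = \vol[m]{\bigl(K \setminus \bar S_v L\bigr)_y},\]
and integrating over $y$ produces the desired inequality.

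The only real work lies in the two bookkeeping identities $\bigl(\bigcap_i E_i\bigr)_y = \bigcap_i (E_i)_y$ and $(\bar S_v E)_y = \tfrac 12 D E_y$; both are unpacked directly from the definitions. No new analytic input beyond Brunn--Minkowski (already built into Theorem \ref{thm_intersets}) and Fubini is required, and the equality cases in Theorem \ref{thm_intersets} play no role here since only the inequality is asserted.
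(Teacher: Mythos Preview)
Your proof is correct and follows essentially the same approach as the paper: slice along the fibers $y + v^m$, apply Theorem \ref{thm_intersets} fiberwise, and integrate via Fubini. Your write-up is slightly more explicit---you introduce the slice notation $E_y$ and spell out the second part fiberwise rather than deferring to the argument pattern of Theorem \ref{thm_intersets}---but the underlying ideas are identical.
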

\begin{proof}
	By Fubini's theorem and Theorem \ref{thm_intersets},
	\begin{align}
		\Vol{\bigcap_{i=1}^k K_i}
		&= \int_{v^{\perp m}} \vol[m]{(x+v^m) \cap \bigcap_{i=1}^k K_i} d x \\
		&= \int_{v^{\perp m}} \vol[m]{\bigcap_{i=1}^k \left( (x+v^m) \cap K_i \right)} d x \\
		&\leq \int_{v^{\perp m}} \vol[m]{\bigcap_{i=1}^k \frac 12 D \left( (x+v^m) \cap K_i \right)} d x \\
		&= \int_{v^{\perp m}} \vol[m]{\bigcap_{i=1}^k (x+v^m) \cap \bar S_v K_i} d x \\
		&= \int_{v^{\perp m}} \vol[m]{(x+v^m) \cap \bigcap_{i=1}^k \bar S_v K_i} d x 
		= \Vol{\bigcap_{i=1}^k \bar S_v K_i}
	\end{align}
	The second statement follows as in the second part of the proof of Theorem \ref{thm_intersets}.
\end{proof}

\begin{theorem}
	\label{thm_bll_direct}
	Let $k \geq 1$ and $f_i:\M \to \R, 1 \leq i \leq k$, be non-negative and $m$-quasi-concave.
	Then 
	\[\int_{\M} \prod_{i=1}^k f_i(x) d x \leq \int_{\M} \prod_{i=1}^k f_i^{(v)}(x) d x\]
\end{theorem}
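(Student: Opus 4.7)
The plan is to apply a layer-cake decomposition to each $f_i$, reducing Theorem~\ref{thm_bll_direct} to a family of set-intersection volume inequalities, and then to invoke the fiber-wise argument already developed for Theorem~\ref{thm_symsets}.

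First, using Tonelli's theorem together with $f_i(x) = \int_0^\infty \chi_{\{f_i \geq t_i\}}(x)\, dt_i$, I would rewrite
\begin{equation}
\int_{\M} \prod_{i=1}^k f_i(x)\, dx = \int_{(0,\infty)^k} \Vol{\bigcap_{i=1}^k \{f_i \geq t_i\}}\, dt_1 \cdots dt_k,
\end{equation}
and likewise for $\prod_i f_i^{(v)}$. It therefore suffices to show that for every $(t_1,\ldots,t_k) \in (0,\infty)^k$,
\[\Vol{\bigcap_{i=1}^k \{f_i \geq t_i\}} \leq \Vol{\bigcap_{i=1}^k \{f_i^{(v)} \geq t_i\}}.\]
By the $m$-quasi-concavity of each $f_i$, every fiber $(y+v^m) \cap \{f_i \geq t_i\}$ is a convex subset of $y+v^m \cong \R^m$. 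Thus, exactly as in the proof of Theorem~\ref{thm_symsets} (whose only input on $K_i$ is fiber-convexity), Fubini combined with Theorem~\ref{thm_intersets} applied slice by slice yields
\[\Vol{\bigcap_{i=1}^k \{f_i \geq t_i\}} \leq \Vol{\bigcap_{i=1}^k \bar S_v \{f_i \geq t_i\}}.\]

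To convert the right-hand side into a statement about $\{f_i^{(v)} \geq t_i\}$, I would use monotonicity (property~\ref{prp_sym_monotonicity}): for any $s \in (0, t_i]$, $\{f_i \geq t_i\} \subseteq \{f_i \geq s\}$ implies $\bar S_v\{f_i \geq t_i\} \subseteq \bar S_v\{f_i \geq s\}$. Hence, for any $x \in \bar S_v\{f_i \geq t_i\}$, Definition~\ref{def_funcsym} gives $f_i^{(v)}(x) \geq \int_0^{t_i} \chi_{\bar S_v\{f_i \geq s\}}(x)\, ds = t_i$, so $\bar S_v\{f_i \geq t_i\} \subseteq \{f_i^{(v)} \geq t_i\}$. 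Taking intersections and volumes closes the chain.

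The only delicate point I anticipate is that Theorem~\ref{thm_symsets} is formally stated for convex sets in $\M$, whereas the super-level sets $\{f_i \geq t_i\}$ of $m$-quasi-concave functions are only convex along the $v$-fibers. This is more of a bookkeeping issue than a genuine obstacle, since inspection of the proof of Theorem~\ref{thm_symsets} reveals that only fiber-convexity is used — precisely what the $m$-quasi-concavity hypothesis provides.
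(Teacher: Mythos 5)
Your proposal is correct and follows essentially the same route as the paper: layer-cake plus Tonelli to reduce to $\Vol{\bigcap_i \{f_i \geq t_i\}} \leq \Vol{\bigcap_i \{f_i^{(v)} \geq t_i\}}$, the fiber-wise Brunn--Minkowski argument of Theorem~\ref{thm_intersets}/\ref{thm_symsets} for the symmetrization step, and the identification of $\bar S_v\{f_i\geq t_i\}$ with a subset of $\{f_i^{(v)}\geq t_i\}$. If anything you are slightly more careful than the paper, which asserts $\bar S_v\{f_i \geq t_i\} = \{f_i^{(v)} \geq t_i\}$ outright and cites Theorem~\ref{thm_intersets} without remarking that the super-level sets are only fiber-convex; your monotonicity argument for the inclusion and your explicit note on fiber-convexity close exactly those small gaps.
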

\begin{proof}
	By the layer cake formula and Theorem \ref{thm_intersets},
	\begin{align}
		\int_{\M} \prod_{i=1}^k f_i(x) d x
		&=\int_{\M} \prod_{i=1}^k \int_0^\infty \chi_{\{f_i \geq t_i\}}(x) d t_i d x \\
		&=\int_0^\infty \cdots \int_0^\infty \int_{\M} \prod_{i=1}^k \chi_{\{f_i \geq t_i\}}(x) d x d t_i\\
		&= \int_0^\infty \cdots \int_0^\infty \Vol{\bigcap_{i=1}^k \{f_i \geq t_i\}} d t_1 \cdots d t_k \\
		&\leq \int_0^\infty \cdots \int_0^\infty \Vol{\bigcap_{i=1}^k \bar S_v \{f_i \geq t_i\}} d t_1 \cdots d t_k \\
		&= \int_0^\infty \cdots \int_0^\infty \Vol{\bigcap_{i=1}^k \{f_i^{(v)} \geq t_i\}} d t_1 \cdots d t_k \\
		&= \int_{\M} \prod_{i=1}^k f_i^{(v)}(x) d x
	\end{align}
\end{proof}

Finally, we can deduce the Rogers--Brascamp--Lieb--Luttinger inequality in its full generality.

\begin{proof}[Proof of Theorem \ref{thm_bll_with_linears}]
	Apply Theorem \ref{thm_bll_direct} to the functions $f_i \circ \overline{L_i}$, and use property \ref{prp_func_right_commutation}.
\end{proof}

For convex functions we obtain the following:

\begin{theorem}
	\label{thm_bll_convex}
	Let $f:\M \to \R$ be a non-negative and Steiner-concave function, and let $g:\M \to \R$ be non-negative and $m$-quasi-convex, then
	\[\int_{\M} f(x) g(x) d x \geq \int_{\M} f(x) g_{(v)}(x) d x \]
\end{theorem}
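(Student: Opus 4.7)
The plan is to reduce Theorem \ref{thm_bll_convex} to the quasi-concave version (Theorem \ref{thm_bll_direct}) by replacing $g$ with $M - g$, where $M$ is a uniform upper bound for $g$. First I would treat the case in which $g$ is bounded above by some $M > 0$ and $f$ is integrable; the general case then follows by standard truncation, using $g \wedge M \uparrow g$ as $M \to \infty$ (still $m$-quasi-convex, with $(g \wedge M)_{(v)} \uparrow g_{(v)}$ directly from Definition \ref{def_funcsym_convex} by monotone convergence), together with an analogous approximation on $f$.

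The key identity to establish in this setup is
\[
(M - g)^{(v)}(x) = M - g_{(v)}(x),
\]
which is a direct computation. The super-level sets $\{M - g \geq s\} = \{g \leq M - s\}$ are convex along $v^m$-fibers (since $g$ is $m$-quasi-convex), so $M - g$ is itself non-negative and $m$-quasi-concave. Expanding Definition \ref{def_funcsym} and changing variables $t = M - s$ gives
\[
(M - g)^{(v)}(x) = \int_0^M \chi_{\bar S_v \{g \leq t\}}(x) \, dt,
\]
while Definition \ref{def_funcsym_convex} can be rewritten as
\[
g_{(v)}(x) = M - \int_0^M \chi_{\bar S_v \{g < t\}}(x) \, dt,
\]
since $\{g < t\} = \M$ for $t > M$ makes the integrand vanish there. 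The discrepancy between $\{g < t\}$ and $\{g \leq t\}$ concerns only the countably many $t$ with $|\{g = t\}| > 0$, hence disappears once integrated in $t$.

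With the identity in hand, I apply Theorem \ref{thm_bll_direct} to the two non-negative $m$-quasi-concave functions $f$ and $M - g$. Property \ref{prp_steiner_concave_is_invariant} yields $f^{(v)} = f$, and the identity above gives $(M - g)^{(v)} = M - g_{(v)}$, so the theorem delivers
\[
\int_\M f(M - g) \, dx \leq \int_\M f(M - g_{(v)}) \, dx.
\]
Since $\int f$ is finite, the term $M \int f$ cancels from both sides, leaving exactly $\int f g \geq \int f g_{(v)}$.

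I expect the main technical nuisance — though not a conceptual obstacle — to be the careful bookkeeping around the strict-versus-nonstrict inequalities in establishing $(M - g)^{(v)} = M - g_{(v)}$ rigorously, and the approximation argument lifting the auxiliary boundedness of $g$ and integrability of $f$. Conceptually, the statement is the \emph{complement} of Theorem \ref{thm_bll_direct} read through the substitution $g \mapsto M - g$; no genuinely new geometric ideas enter beyond those already encoded in Theorem \ref{thm_symsets} and hence in Theorem \ref{thm_bll_direct}.
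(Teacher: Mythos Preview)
Your approach is correct and genuinely different from the paper's. The paper does not pass through Theorem \ref{thm_bll_direct} at all: it applies the layer-cake formula to $f$ and $g$ separately, writes $\{g \geq t_2\}$ as the complement of $\{g < t_2\}$, and then invokes directly the second part of Theorem \ref{thm_symsets} (the set-difference inequality $\Vol{K \setminus L} \geq \Vol{K \setminus \bar S_v L}$ for $\bar S_v$-invariant $K$). This avoids any truncation of $g$ or integrability hypothesis on $f$, so the argument is shorter and cleaner. Your route, by contrast, makes explicit the duality $(M-g)^{(v)} = M - g_{(v)}$ between Definitions \ref{def_funcsym} and \ref{def_funcsym_convex}, and shows that Theorem \ref{thm_bll_convex} is literally Theorem \ref{thm_bll_direct} read through the complement; this is conceptually pleasant but costs you the approximation step.

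One small correction: your justification that the strict/nonstrict discrepancy ``concerns only the countably many $t$ with $|\{g=t\}|>0$'' is not the right reason, since $\bar S_v$ is a set-theoretic operation and can distinguish $\{g<t\}$ from $\{g\leq t\}$ even when they differ by a null set. The correct argument is that for each fixed $x$ the maps $t\mapsto \chi_{\bar S_v\{g<t\}}(x)$ and $t\mapsto \chi_{\bar S_v\{g\leq t\}}(x)$ are nondecreasing $\{0,1\}$-valued functions sandwiched between each other (using $\{g\leq t\}\subseteq\{g<t'\}$ for $t'>t$ and monotonicity of $\bar S_v$), hence differ at most at a single $t$ and have equal integrals. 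With that fix, your identity and the rest of the argument go through.
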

\begin{proof}
	By the layer cake formula, Definition \ref{def_funcsym_convex}, and the second part of Theorem \ref{thm_symsets},
	\begin{align}
		\int_{\M} f(x) g(x) d x
		&= \int_0^\infty \int_0^\infty \Vol{\{f \geq t\}\cap \{g \geq t_2\}} d t_1 d t_2 \\
		&= \int_0^\infty \int_0^\infty \Vol{\{f \geq t\} \setminus \{g < t_2\}} d t_1 d t_2 \\
		&\ge \int_0^\infty \int_0^\infty \Vol{\bar S_v \{f \geq t\} \setminus \bar S_v \{g < t_2\}} d t_1 d t_2 \\
		&= \int_0^\infty \int_0^\infty \Vol{\{f^{(v)} \geq t\} \setminus \{g^{(v)} < t_2\}} d t_1 d t_2 \\
		&= \int_0^\infty \int_0^\infty \Vol{\{f^{(v)} \geq t\} \cap \{g^{(v)} \geq t_2\}} d t_1 d t_2 \\
		&= \int_{\M} f(x) g_{(v)}(x) d x
	\end{align}
	where we used that $f^{(v)} = f$, since $f$ is Steiner-concave.
\end{proof}

\section{Some Applications}
\label{sec_applications}

\subsection{Particular cases of Theorem \ref{thm_bll_with_linears}}

As mentioned in the introduction, we see that Theorem \ref{thm_bll_paouris_pivovarov} is a particular case of Theorem \ref{thm_bll_with_linears}.

A particular case of Theorem \ref{thm_bll_direct} (which is Theorem \ref{thm_bll_with_linears} with $m=d$ and $L_i = \I$) is the following
\begin{corollary}
	\label{cor_general_pp_noGCC}
	Let $f:\M \to \R$ be Steiner-concave, and $K \in \conbod$, then
	\[\int_K f(x) d x \leq \int_{\bar S_v K} f(x) d x.\]
\end{corollary}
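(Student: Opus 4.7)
The plan is to derive the corollary directly from Theorem \ref{thm_bll_direct} by a judicious choice of the two functions in the product, together with property \ref{prp_steiner_concave_is_invariant}.

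First I would take $k=2$ and set $f_1 = f$ and $f_2 = \chi_K$ in Theorem \ref{thm_bll_direct}. I need to verify that both choices are admissible, namely that they are non-negative and $m$-quasi-concave. For $f_1$ this is part of the hypothesis (Steiner-concavity implies $m$-quasi-concavity by definition, since for each $v\in\s$ and $y\in v^{\perp m}$ the function $t\mapsto f(y+vt)$ is even and quasi-concave). For $\chi_K$, since $K$ is a convex body, its super-level sets are either $K$ or $\emptyset$, both of which have convex intersection with any affine subspace $y+v^m$, so $\chi_K$ is $m$-quasi-concave.

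Next I would identify the symmetrized functions. From Definition \ref{def_funcsym} we get $(\chi_K)^{(v)} = \chi_{\bar S_v K}$, since the only non-trivial super-level set of $\chi_K$ at level $t\in(0,1]$ is $K$, and the layer-cake integral collapses to the indicator of $\bar S_v K$. On the other hand, since $f$ is Steiner-concave, property \ref{prp_steiner_concave_is_invariant} gives $f^{(v)} = f$.

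Plugging in, Theorem \ref{thm_bll_direct} yields
\[
\int_{\M} f(x)\,\chi_K(x)\,dx \;\leq\; \int_{\M} f^{(v)}(x)\,\chi_{\bar S_v K}(x)\,dx \;=\; \int_{\M} f(x)\,\chi_{\bar S_v K}(x)\,dx,
\]
which is precisely the desired inequality $\int_K f \leq \int_{\bar S_v K} f$. There is no substantive obstacle here; the only point worth a brief remark is the identification $(\chi_K)^{(v)} = \chi_{\bar S_v K}$, which is immediate from the definitions but is what makes the corollary a genuine instance rather than a reformulation of Theorem \ref{thm_bll_direct}.
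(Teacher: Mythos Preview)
Your proof is correct and matches the paper's approach exactly: the paper simply presents this corollary as a particular case of Theorem \ref{thm_bll_direct}, and your argument with $k=2$, $f_1=f$, $f_2=\chi_K$, together with $f^{(v)}=f$ and $(\chi_K)^{(v)}=\chi_{\bar S_v K}$, is precisely how that specialization is carried out.
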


For example, it was proven in \cite[Corollary 4.2]{PP17-2} that $(x_1, \ldots, x_m) \mapsto \vol{\co(\{x_1, \ldots, x_m\})}$ is Steiner-convex, where $\co$ denotes the convex hull.
We deduce immediately,
\begin{corollary}
	\label{cor_Groemer_noGCC}
	Let $K \in \conbod$ and $\gamma:(0,\infty) \to (0,\infty)$ be a decreasing function, then
	\[\int_K \gamma(\vol{\co(\{x_1, \ldots, x_m\})}) d x \leq \int_{\bar S_v K} \gamma(\vol{\co(\{x_1, \ldots, x_m\})}) d x.\]
\end{corollary}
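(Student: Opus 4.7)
The plan is to reduce the statement to Corollary \ref{cor_general_pp_noGCC} by showing that the integrand
\[f(x_1,\ldots,x_m) := \gamma\bigl(\vol{\co(\{x_1,\ldots,x_m\})}\bigr)\]
is Steiner-concave in the sense of Definition \ref{def_steinerconcave}.

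First I would invoke the result of Paouris--Pivovarov cited in the paragraph before Corollary \ref{cor_Groemer_noGCC} (namely, \cite[Corollary 4.2]{PP17-2}), which asserts that $G_0(x_1,\ldots,x_m) := \vol{\co(\{x_1,\ldots,x_m\})}$ is Steiner-convex. Unwinding the definition, this means that for every $v \in \s$ and every $y_1,\ldots,y_m \in v^\perp$, the function
\[G(t_1,\ldots,t_m) := \vol{\co\bigl(\{y_1+t_1v,\ldots,y_m+t_mv\}\bigr)}\]
on $\R^m$ is even and quasi-convex.

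Second, I would check that post-composition with a decreasing $\gamma:(0,\infty)\to(0,\infty)$ converts ``even and quasi-convex'' into ``even and quasi-concave''. Evenness of $\gamma\circ G$ is immediate from evenness of $G$. For the quasi-concavity, for any $c>0$ one has
\[\{\gamma\circ G\geq c\} = G^{-1}\bigl(\gamma^{-1}([c,\infty))\bigr),\]
and since $\gamma$ is decreasing the set $\gamma^{-1}([c,\infty))$ is a sub-interval of $(0,\infty)$ of the form $(0,s_c)$ or $(0,s_c]$. Hence $\{\gamma\circ G\geq c\}$ is a sublevel set of $G$, which is convex by quasi-convexity of $G$. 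Therefore $G(t_1,\ldots,t_m) \mapsto \gamma(G(t_1,\ldots,t_m))$ is even and quasi-concave, i.e.\ $f$ is Steiner-concave.

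Third, since $f$ is Steiner-concave and $K\in\conbod$, Corollary \ref{cor_general_pp_noGCC} gives
\[\int_K f(x)\,dx \leq \int_{\bar S_v K} f(x)\,dx,\]
which is exactly the claimed inequality. The only minor point to check is that $\gamma$ is only defined on $(0,\infty)$ while $G$ may vanish on the set of configurations $(x_1,\ldots,x_m)$ whose convex hull has zero $n$-volume; but this locus is a measure-zero algebraic subset of $\M$, so it plays no role in either integral and may be handled by tacitly extending $\gamma$ at $0$ by its limit, or by discarding the null set. There is no substantial obstacle here: the content is entirely carried by the Steiner-convexity of $G_0$ from \cite{PP17-2} and by Corollary \ref{cor_general_pp_noGCC}.
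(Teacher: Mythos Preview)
Your proposal is correct and follows essentially the same approach as the paper: the paper simply remarks that $(x_1,\ldots,x_m)\mapsto \vol{\co(\{x_1,\ldots,x_m\})}$ is Steiner-convex by \cite[Corollary 4.2]{PP17-2} and says ``we deduce immediately,'' leaving implicit the observation that composing with a decreasing $\gamma$ yields a Steiner-concave function to which Corollary \ref{cor_general_pp_noGCC} applies. You have just spelled out those implicit steps (and the measure-zero caveat) in more detail.
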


For the case $n=1$, Theorem \ref{thm_bll_direct} gives
\begin{corollary}
	Let $k \geq 1$ and $f_i:\R^m \to \R, 1 \leq i \leq k$, be quasi-concave functions, then
	\label{cor_bll_onedimensional}
	\[
		\int_{\R^m} \prod_{i=1}^k f_i(x) d x \leq \int_{\R^m} \prod_{i=1}^k f_i^{(1)}(x) d x.
	\]
	where $f_i^{(1)}$ is defined as in property \eqref{prp_func_onedimensional}.
\end{corollary}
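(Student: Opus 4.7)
The plan is to view this as a direct specialization of Theorem \ref{thm_bll_direct} to the case $n=1$. First I would check that the hypotheses match under the identification $\R^m = \M[1,m]$: when $n=1$ we have $S^{n-1} = S^0 = \{-1,+1\}$, and for any $v \in S^0$ the subspace $v^{\perp m} = \{x \in \M[1,m] : vx = 0\}$ collapses to $\{0\}$. Consequently, the $m$-quasi-concavity condition from Definition \ref{def_quasiconcave} applied to a function $f:\R^m \to \R$ reduces to the single requirement that $t \mapsto f(\pm t)$ be quasi-concave, which is automatic for any quasi-concave $f$. So each $f_i$ qualifies as an $m$-quasi-concave function on $\M[1,m]$, and Theorem \ref{thm_bll_direct} is applicable.

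Next I would invoke Theorem \ref{thm_bll_direct} with $n=1$ and $v = +1 \in S^0$ to obtain
\[\int_{\R^m} \prod_{i=1}^k f_i(x)\,dx \;\leq\; \int_{\R^m} \prod_{i=1}^k f_i^{(+1)}(x)\,dx.\]
It then only remains to identify $f_i^{(+1)}$ with the one-dimensional symmetrization appearing in the statement. By property \ref{prp_func_onedimensional}, for any $x \in \R^m$,
\[f_i^{(+1)}(x) = \int_0^\infty \chi_{\frac{1}{2} D\{f_i \geq t\}}(x)\,dt,\]
which is precisely the definition of $f_i^{(1)}$ that the corollary refers to, so the two inequalities coincide.

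There is essentially no obstacle here beyond careful bookkeeping of the $n=1$ specialization: all the geometric content already lives in Theorem \ref{thm_intersets} (the $\R^m$ intersection inequality via Brunn--Minkowski) and in the layer-cake manipulation of Theorem \ref{thm_bll_direct}. The corollary merely records what these results say when the ambient dimension $n$ collapses to $1$ and the sphere $S^{n-1}$ reduces to two points, with the fiber symmetrization $\bar S_{\pm 1}$ specializing (by property \ref{prp_sym_is_difference}) to the half difference-body operation $L \mapsto \frac{1}{2} D L$ on super-level sets.
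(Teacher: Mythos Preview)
Your proposal is correct and follows exactly the paper's approach: the corollary is stated there as the $n=1$ specialization of Theorem \ref{thm_bll_direct}, and you have simply unpacked the verification that quasi-concave functions on $\R^m$ are $m$-quasi-concave when $n=1$ and that $f_i^{(+1)}$ coincides with $f_i^{(1)}$ via property \ref{prp_func_onedimensional}.
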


Notice that, as in the remark after Theorem \ref{thm_intersets}, this inequality cannot be proven directly with inequality \eqref{eq_bll_original}.

\subsection{Operator Norms}
\label{subsec_operatornorms}

For any convex bodies $K \subseteq \R^m, L \subseteq \R^n$ consider the set
\[B_{K,L} = \{x \in \M: x w \in L,\ \ \forall w \in K\}\]
where we identify vectors in $\R^m$ and $\R^n$ with column vectors in $\M[m,1]$ and $\M[n,1]$ respectively.

If $K,L$ are origin-symmetric, the set $B_{K,L} \subseteq \M$ is the unit ball in the operator norm of the set of linear maps between the Banach spaces
$(\R^m, \|\cdot\|_K) \to (\R^n, \|\cdot\|_L)$.
Each of these maps induces a linear map of the dual spaces $(\R^m, \|\cdot\|_{L^\circ}) \to (\R^n, \|\cdot\|_{K^\circ})$
by transposition. In general, for convex bodies $K,L$ containing the origin in the interior, this reads as 
\begin{equation}
	\label{eq_transposition}
	B_{K,L}^t = B_{L^\circ, K^\circ}.
\end{equation}

In \cite{HLPRY23_2} the following result is obtained as a limit case of the higher-order $L^p$ Petty-projection inequality.
We give here a shorter and clearer proof.
\begin{theorem}
	\label{thm_sym_operatornorm}
	For any convex bodies $K \subseteq \R^m, L \subseteq \R^n$,
	\[\bar S_v B_{K,L} \subseteq B_{K,S_v L}\]
\end{theorem}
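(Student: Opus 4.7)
The plan is to unwind the definitions of $\bar S_v$ on the left and $S_v$ on the right, and check the membership $xw \in S_v L$ pointwise for each $w \in K$. The key is that the conditions $v^t y = 0$ (from $y \in v^{\perp m}$) and the action $x \mapsto xw$ interact nicely, since right multiplication by $w \in \R^m$ commutes with the decomposition $\M = v^m \oplus v^{\perp m}$.

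Concretely, I would start by taking an arbitrary $x \in \bar S_v B_{K,L}$ and using the formula \eqref{eq_sym_def} to write
\[ x = y + v\,\tfrac{t-s}{2}, \qquad y \in v^{\perp m},\ t,s \in \M[1,m],\ y+vt,\ y+vs \in B_{K,L}. \]
Now fix any $w \in K$. The definition of $B_{K,L}$ gives $(y+vt)w \in L$ and $(y+vs)w \in L$, that is,
\[ yw + v(tw) \in L \quad \text{and} \quad yw + v(sw) \in L, \]
where $tw, sw \in \R$. Meanwhile, $v^t(yw) = (v^t y)w = 0$, so $yw \in v^\perp \subseteq \R^n$.

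The image of $x$ under right multiplication by $w$ is
\[ xw = yw + v\,\tfrac{tw-sw}{2}, \]
so I want to recognize this as a point of $S_v L$. Since $yw + v(tw)$ and $yw + v(sw)$ both lie in $L$, the point $yw$ lies in $P_{v^\perp}L$, and the segment joining these two points (which lies in the convex set $L$) has length $|tw - sw|$ in the direction $v$. Hence $\ell_v(yw) \geq |tw - sw|$, and the midpoint of that segment, namely $yw + v\,\tfrac{tw-sw}{2} + v\,\tfrac{tw+sw}{2}\cdot 0$ — more cleanly, the point $yw + v\cdot\alpha$ with $|\alpha| = |tw-sw|/2 \leq \ell_v(yw)/2$ — belongs to $S_v L$ by its very definition. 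This gives $xw \in S_v L$ for every $w \in K$, hence $x \in B_{K, S_v L}$.

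I do not foresee a genuine obstacle: the argument is a one-line unpacking once one notices that $\overline{w}$ maps the fiber decomposition of $\M$ over $v$ to the line decomposition of $\R^n$ over $v$. The only thing to be a bit careful about is that $L$ need not be assumed symmetric, so the Steiner symmetrization $S_v L$ is built from the classical midpoint of segments rather than from difference bodies; but the midpoint of two points of $L$ on a line parallel to $v$ is exactly what $S_v L$ records, which is why the inequality produces $S_v L$ on the right rather than $\frac{1}{2}D L$-type objects.
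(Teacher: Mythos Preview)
Your argument is correct and is essentially the paper's proof unpacked: the paper observes that for each $e\in K$ one has $(B_{K,L})e\subseteq L$, then applies the already-established properties $(\bar S_v B_{K,L})e \subseteq \bar S_v\big((B_{K,L})e\big) \subseteq \bar S_v L = S_v L$ (right-commutation \ref{prp_sym_right_commutation}, monotonicity \ref{prp_sym_monotonicity}, and \ref{prp_sym_is_steiner}), which is exactly what your pointwise computation reproves in the special case $A=w$ a single column. Your closing caveat about midpoints versus difference bodies is unnecessary, since for convex $L\subseteq\R^n$ the two definitions of $S_vL$ agree (see Remark~\ref{rem_not_only_convex} and property~\ref{prp_sym_is_steiner}).
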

\begin{proof}
	Take any $e \in K \subseteq \M[m,1]$, so that $(B_{K,L}) e \subseteq L$. By \eqref{prp_sym_right_commutation} and \eqref{prp_sym_monotonicity},
	\[(\bar S_v B_{K,L}) e \subseteq \bar S_v (B_{K,L} e) \subseteq \bar S_v L.\]
	Since $e \in K$ was arbitrary, this implies that $\bar S_v B_{K,L} \subseteq B_{K,S_v L}$.
\end{proof}

Theorem \ref{thm_sym_operatornorm} then implies that 
\[\Vol{B_{K,L}} \leq \Vol{B_{K,B_L}}\]
where $B_L$ is the centered Euclidean ball of the same volume as $L$.
As in the proof of \cite[Theorem~1.4]{HLPRY23_2}, the transposition argument mentioned above together with the Blaschke-Santaló inequality yields also
\[\Vol{B_{K,L}} \leq \Vol{B_{B_K,L}}\]

Some more can be said:
\begin{theorem}
Let $F:\M \to \R$ be non-negative and Steiner-concave, and let $K \subseteq \R^m, L \subseteq \R^n$ be convex bodies, then
	\begin{equation}
		\label{eq_integration_right}
		\int_{B_{K,L}} F(x) d x \leq \int_{B_{K,B_L}} F(x) d x.
	\end{equation}
	Let $F:\M[m,n] \to \R$ be non-negative and Steiner-concave, and let $K \subseteq \R^m, L \subseteq \R^n$ be convex bodies containing the origin in their interior, then
	\begin{equation}
		\label{eq_integration_left}
	\int_{B_{K,L}} F(x^t) d x \leq \int_{B_{B_{K^\circ},L}} F(x^t) d x.
	\end{equation}

	In particular, for $n=m$ and $\beta \in (-1,0)$
	\begin{align}
		\int_{B_{K,L} \cap GL_n } |\det(x)|^\beta d x 
		& \leq \left( \frac{\vol{L}\vol{K^\circ}}{\vol{\B}^2}\right)^{n+\beta} \int_{B_{\B,\B} \cap GL_n } |\det(x)|^\beta d x \label{eq_integration_both}.
	\end{align}
	If additionally, $K$ is origin-symmetric,
	\begin{align}
		\int_{B_{K,L} \cap GL_n } |\det(x)|^\beta d x 
		& \leq \left( \frac{\vol{L}}{\vol{K}}\right)^{n+\beta} \int_{B_{\B,\B} \cap GL_n } |\det(x)|^\beta d x \label{eq_integration_BS} \\
		&= \int_{B_{B_K,B_L} \cap GL_n } |\det(x)|^\beta d x \label{eq_integration_rescaling}.
	\end{align}

\end{theorem}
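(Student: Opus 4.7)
The plan is to prove the four inequalities in sequence: \eqref{eq_integration_right} is the geometric core, \eqref{eq_integration_left} follows by a transposition trick, \eqref{eq_integration_both} by specializing to $|\det|^\beta$, and \eqref{eq_integration_BS}--\eqref{eq_integration_rescaling} by the Blaschke-Santal\'o inequality. For \eqref{eq_integration_right}, I would combine Theorem \ref{thm_sym_operatornorm} (which gives $\bar S_v B_{K,L} \subseteq B_{K, S_v L}$) with Corollary \ref{cor_general_pp_noGCC} (which yields $\int_{B_{K,L}} F(x)\, dx \leq \int_{\bar S_v B_{K,L}} F(x)\, dx$ since $F$ is Steiner-concave); monotonicity of the integral in the domain (using $F \geq 0$) then gives
\[
\int_{B_{K,L}} F(x)\, dx \leq \int_{\bar S_v B_{K,L}} F(x)\, dx \leq \int_{B_{K, S_v L}} F(x)\, dx.
\]
Iterating along a classical sequence of Steiner symmetrizations of $L$ with $S_{v_k}\cdots S_{v_1}L \to B_L$ in Hausdorff distance (with $K$ held fixed), and passing to the limit, gives \eqref{eq_integration_right}.

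For \eqref{eq_integration_left}, the substitution $y = x^t$ combined with the transposition identity \eqref{eq_transposition} rewrites the left side as $\int_{B_{L^\circ, K^\circ}} F(y)\, dy$; I would then apply \eqref{eq_integration_right} in $\M[m,n]$ to the pair $(L^\circ, K^\circ)$ in place of $(K,L)$, and transpose back, using the bipolar identity $L^{\circ\circ} = L$ to match the set denoted $B_{B_{K^\circ}, L}$ in the statement. For \eqref{eq_integration_both}, one first checks that $|\det|^\beta$ with $\beta \in (-1,0)$ is non-negative and Steiner-concave on $\M[n,n]$: the multilinearity identity from the introduction reduces a slice $t \mapsto |\det(y+vt)|^\beta$ with $y \in v^{\perp n}$ to $|\lambda(t)|^\beta$ for a linear functional $\lambda:\R^n\to \R$, which is manifestly even and, for $\beta < 0$, quasi-concave. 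Since $|\det(x^t)|^\beta = |\det(x)|^\beta$, both \eqref{eq_integration_right} and \eqref{eq_integration_left} apply in succession, reducing the problem to integrating $|\det|^\beta$ over a Euclidean dilate of $B_{\B,\B}$; a direct change of variables shows that the dilation factor raised to $n(n+\beta)$ equals $(\vol{L}\vol{K^\circ}/\vol{\B}^2)^{n+\beta}$.

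Finally, \eqref{eq_integration_BS} follows by feeding the Blaschke-Santal\'o inequality $\vol{K}\vol{K^\circ} \leq \vol{\B}^2$ (valid for origin-symmetric $K$) into the factor from \eqref{eq_integration_both}, using $n+\beta > 0$; the equality \eqref{eq_integration_rescaling} is the same change-of-variables identity applied to $B_{B_K, B_L}$, which is the dilate of $B_{\B,\B}$ by a factor whose $n$th power is $\vol{L}/\vol{K}$. I expect the main obstacle to be the limit argument in \eqref{eq_integration_right}: one must ensure that successive Steiner symmetrizations of $L$ (with $K$ fixed) produce a sequence of bodies $B_{K,L_k}$ along which $\int F\, dx$ converges to $\int_{B_{K,B_L}} F\, dx$, which requires a Hausdorff continuity statement for $L \mapsto B_{K,L}$ together with a mild convergence-of-integrals argument (Fatou or dominated convergence, exploiting $F \geq 0$). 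Once this point is settled, the rest is mechanical bookkeeping of scaling factors and applications of previously established results.
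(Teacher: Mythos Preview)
Your approach is correct and matches the paper's proof in all essential respects: \eqref{eq_integration_right} via Corollary~\ref{cor_general_pp_noGCC} (a special case of Theorem~\ref{thm_bll_direct}) together with Theorem~\ref{thm_sym_operatornorm} and iteration; \eqref{eq_integration_left} via transposition and \eqref{eq_transposition}; \eqref{eq_integration_both} by applying both to $|\det|^\beta$ and rescaling; and \eqref{eq_integration_BS}--\eqref{eq_integration_rescaling} via Blaschke--Santal\'o and scaling. The only noteworthy difference is that the paper, in proving \eqref{eq_integration_both}, first works with a continuous strictly decreasing $\gamma:[0,\infty)\to\R$ and only afterwards approximates $t\mapsto t^\beta$; this sidesteps the technicality that $|\det|^\beta$ takes the value $+\infty$ on singular matrices and is therefore not literally an $\R$-valued function as required by the hypotheses of Theorem~\ref{thm_bll_direct} and its corollaries, whereas you apply \eqref{eq_integration_right} and \eqref{eq_integration_left} to $|\det|^\beta$ directly.
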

\begin{proof}
	The first inequality \eqref{eq_integration_right} is a direct consequence of Theorem \ref{thm_bll_direct}.
	The second one, \eqref{eq_integration_left} comes from \eqref{eq_integration_right}, formula \eqref{eq_transposition} and the change of variables $x \mapsto x^t$. 

	For the third inequality \eqref{eq_integration_both}, first observe that the function $x \in \M[n,n] \mapsto |\det(x)|^\beta \in [0, \infty]$ is locally integrable for $\beta \in (-1,0)$.

	Take any continuous non-negative and strictly decreasing function $\gamma:[0,\infty) \to \R$.
	Since $x \mapsto \gamma(|\det(x)|)$ is Steiner-concave and invariant by transposition, we may apply \eqref{eq_integration_right}, the change of variables $x \mapsto x^t$  and \eqref{eq_integration_left}, to obtain
	\begin{align}
		\int_{B_{K,L}} \gamma(|\det(x)|) d x
		&\leq \int_{B_{K,B_L}} \gamma(|\det(x)|) d x \\
		&\leq \int_{B_{B_L^\circ, K^\circ}} \gamma(|\det(y)|) d y \\
		&\leq \int_{B_{B_L^\circ, B_{K^\circ}}} \gamma(|\det(y)|) d y \\
		&\leq \int_{B_{(B_{K^\circ})^\circ,B_L}} \gamma(|\det(x)|) d x.
	\end{align}
	Now we may approximate $t \in [0,\infty) \mapsto t^\beta \in [0,\infty]$ by continuous non-increasing functions, and by standard convergence theorems, we get
	\begin{align}
		\int_{B_{K,L} \cap GL_n } |\det(x)|^\beta d x 
		&\leq \int_{B_{(B_{K^\circ})^\circ,B_L} \cap GL_n} |\det(x)|^\beta d x \\
		&= \left(\frac{\vol{L}\vol{K^\circ}}{\vol{\B}^2}\right)^{n+\beta}\int_{B_{\B,\B} \cap GL_n} |\det(y)|^\beta d y,
	\end{align}
	where we used the change of variables $x = \left(\frac{\vol{L}\vol{K^\circ}}{\vol{\B}^2}\right)^{1/n} y$.
	Finally, inequality \eqref{eq_integration_BS} follows from \eqref{eq_integration_both} and the Blaschke-Santaló inequality, and equality \eqref{eq_integration_rescaling} follows from \eqref{eq_integration_BS} and the change of variables $y = (\vol{K}/\vol{L})^{1/n} x$.
\end{proof}

\subsection{Schneider's Difference Body}
For $K \subseteq \R^n$ and $m=1$, Schneider \cite{Sch70} defined the higher-order difference body as
\[D^m K = \left\{ (x_1, \ldots, x_m) : K \cap (K+x_1) \cap \cdots \cap (K+x_m) \neq \emptyset \right\}. \]

Here we prove that Schneider's operator intertwines with the fiber symmetrization.

\begin{theorem}
	\label{thm_schneider_difference_body}
	If $K \subseteq \R^n$ is a convex body, then $\bar S_v(D^m K) \supseteq D^m (S_v K)$.
\end{theorem}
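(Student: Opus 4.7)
The plan is to start from a witness of $(x_1,\ldots,x_m) \in D^m(S_v K)$ and, exploiting the convexity of $K$, rewrite it as a pair of witnesses placing $(x_1,\ldots,x_m)$ in $\bar S_v(D^m K)$ via formula \eqref{eq_sym_def}. Write $X = (x_1|\cdots|x_m) = Y + vT$ with $Y \in v^{\perp m}$ and $T = (t_1,\ldots,t_m) \in \M[1,m]$, and let $y_i$ denote the $i$-th column of $Y$. By hypothesis there exists $p = p' + \beta v \in S_v K$ (with $p' \in v^\perp$) such that $p - x_i = (p' - y_i) + (\beta - t_i)v \in S_v K$ for every $i$. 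The central tool is a chord characterization of $S_v K$: since $K$ is convex, each line $z + \R v$ meets $K$ in an interval, and $z + \gamma v \in S_v K$ if and only if $\gamma = (a-b)/2$ for some reals $a, b$ with $z + av,\, z + bv \in K$. Applying this to $p$ and to each $p - x_i$ produces scalars $u, w, u_i, w_i$ such that $p'+uv$, $p'+wv$, $(p'-y_i)+u_iv$, $(p'-y_i)+w_iv$ all belong to $K$, with $\beta = (u-w)/2$ and $\beta - t_i = (u_i - w_i)/2$.

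With these scalars in hand, I would define $s^*, t^* \in \M[1,m]$ by $s^*_i = w - w_i$ and $t^*_i = u - u_i$. The identity
\[ \frac{t^*_i - s^*_i}{2} = \frac{(u-w)-(u_i-w_i)}{2} = \beta - (\beta - t_i) = t_i \]
yields $Y + v\,\frac{t^* - s^*}{2} = Y + vT = X$. It then remains to verify that $Y + vs^*$ and $Y + vt^*$ both lie in $D^m K$. The natural candidate witness for $Y + vs^*$ is $p' + wv \in K$: for each $i$,
\[ (p'+wv) - (y_i + s^*_i v) = (p' - y_i) + w_i v \in K, \]
so $K \cap \bigcap_i \bigl( K + y_i + s^*_i v \bigr) \ni p' + wv$, giving $Y + vs^* \in D^m K$. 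Replacing $(w, w_i)$ by $(u, u_i)$ and using the witness $p' + uv$ yields $Y + vt^* \in D^m K$ by the same computation. Invoking \eqref{eq_sym_def} places $X \in \bar S_v(D^m K)$, as claimed.

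The only substantive step is the chord characterization of $S_v K$ for convex $K$ together with the matching of scalar parameters; once the identity $\frac{(u-w)-(u_i-w_i)}{2} = t_i$ is noticed, the formulas for $s^*$ and $t^*$ are essentially forced. No limiting or measure-theoretic argument is required, which is consistent with the purely set-inclusion nature of the statement.
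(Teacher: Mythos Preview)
Your proof is correct. You carry out a direct element-chasing argument: given $X\in D^m(S_vK)$ with witness $p\in S_vK$, you use the description $S_vK=\bar S_vK=\{y+v\tfrac{a-b}{2}:y+va,\,y+vb\in K\}$ (valid for convex $K$, by property \ref{prp_sym_is_steiner} and formula \eqref{eq_sym_def}) to pull back $p$ and each $p-x_i$ to points of $K$, and then assemble from these the two matrices $Y+vs^*,\,Y+vt^*\in D^mK$ required by \eqref{eq_sym_def}. The arithmetic matching $\tfrac{t^*_i-s^*_i}{2}=t_i$ and the verification of the witnesses $p'+wv$, $p'+uv$ are both right.

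The paper proceeds quite differently. It observes that $D^mK = K^{m+1}P_m$ for an explicit $(m+1)\times m$ matrix $P_m$, and then the inclusion drops out in one line from the general properties \ref{prp_sym_right_commutation} and \ref{prp_sym_product}:
\[
\bar S_v(D^mK)=\bar S_v(K^{m+1}P_m)\supseteq \bar S_v(K^{m+1})\,P_m=(S_vK)^{m+1}P_m=D^m(S_vK).
\]
So the paper's argument is structural: it treats $D^m$ as the image of a product under a linear map and invokes the already-established compatibility of $\bar S_v$ with products and right-multiplication. Your argument is instead a bare-hands construction of the two points $Y+vs^*$, $Y+vt^*$; it is longer but entirely self-contained (you never need \ref{prp_sym_right_commutation} or \ref{prp_sym_product}), and it makes explicit exactly how a witness in $S_vK$ unfolds into a pair of witnesses in $K$. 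Both approaches yield the same inclusion, and neither requires any measure-theoretic or limiting step.
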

\begin{proof}
	Define the matrix $P_m \in \M[m+1,m]$ by
	\[
		P_m = \left(
	\begin{array}{cccc}	1 & 1 &\cdots & 1 \\
				-1& 0 &\cdots & 0 \\
				0& -1 &\cdots & 0 \\
				\cdots & \cdots & \cdots & \cdots \\
				0& 0 &\cdots & -1 \\
	\end{array}
	\right).
	\]
	It is easy to see that $D^m K = K^{m+1} P_m$.
	By properties \eqref{prp_sym_right_commutation} and \eqref{prp_sym_product},
	\[\bar S_v(D^m K) \supseteq \bar S_v(K^{m+1}) P_m = (S_v K)^{m+1} P_m = D^m(S_v K).\]
\end{proof}
In \cite{Sch70} Schneider conjectured that the volume of $D^m K$ is minimized among convex sets if $K$ is an ellipsoid.
Unfortunately Theorem \ref{thm_schneider_difference_body} is not appropriate to study this conjecture, since the operator $\bar S_v$ is volume-increasing.
However, combining Theorem \ref{thm_schneider_difference_body} and Proposition \ref{prop_meanwidth} we get a (much weaker) result in this direction.

\begin{theorem}
	Among all convex bodies $K \subseteq \R^n$ of a fixed volume, Euclidean balls minimize the mean width of $D^m K$.
\end{theorem}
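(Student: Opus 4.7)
The plan is to combine Theorem~\ref{thm_schneider_difference_body} with Proposition~\ref{prop_meanwidth} to obtain monotonicity of $w(D^m K)$ under Steiner symmetrization of $K$, and then apply the classical fact that iterated Steiner symmetrizations converge in Hausdorff distance to a Euclidean ball.

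The key step is the monotonicity
\[
w(D^m(S_v K)) \leq w(D^m K)
\]
for every convex body $K \subseteq \R^n$ and every $v \in \s$. By Theorem~\ref{thm_schneider_difference_body}, the inclusion $D^m(S_v K) \subseteq \bar S_v(D^m K)$ gives the pointwise bound $h_{D^m(S_v K)} \leq h_{\bar S_v(D^m K)}$ on $\S$, and integrating over $\S$ yields $w(D^m(S_v K)) \leq w(\bar S_v(D^m K))$. Then Proposition~\ref{prop_meanwidth}, applied with $A = I_m$ and $p = 1$, gives $w(\bar S_v(D^m K)) \leq w(D^m K)$. To invoke Proposition~\ref{prop_meanwidth} one needs $D^m K$ to be a convex body with the origin in its interior; this is automatic, since $K$ being a convex body forces $K \cap (K+x_1) \cap \cdots \cap (K+x_m)$ to be non-empty for all sufficiently small $x_1, \ldots, x_m \in \R^n$, placing $0$ in the interior of $D^m K$.

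The second step is standard: for any convex body $K \subseteq \R^n$ of volume $V$, there exists a sequence of unit vectors $v_1, v_2, \ldots \in \s$ such that the iterated Steiner symmetrizations $K_k := S_{v_k} \circ \cdots \circ S_{v_1}(K)$ converge in Hausdorff distance to the centered Euclidean ball $B$ of volume $V$. Iterating the monotonicity inequality gives $w(D^m K_k) \leq w(D^m K)$ for every $k$. Using the identity $D^m L = L^{m+1} P_m$ from the proof of Theorem~\ref{thm_schneider_difference_body}, the operator $L \mapsto D^m L$ is Hausdorff-continuous (being the composition of a Cartesian product with right multiplication by the fixed matrix $P_m$), so $D^m K_k \to D^m B$ in the Hausdorff metric. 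Since the mean width is also Hausdorff-continuous, $w(D^m K_k) \to w(D^m B)$, and letting $k \to \infty$ yields $w(D^m B) \leq w(D^m K)$.

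There is no serious obstacle here: the argument merely packages the intertwining relation of Theorem~\ref{thm_schneider_difference_body} with the mean-width monotonicity of Proposition~\ref{prop_meanwidth}, followed by a classical symmetrization-to-a-ball limit that uses only the elementary continuity of $D^m$ and of the mean width.
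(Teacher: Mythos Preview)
Your proof is correct and follows exactly the approach the paper indicates: chain the inclusion $D^m(S_v K)\subseteq \bar S_v(D^m K)$ from Theorem~\ref{thm_schneider_difference_body} with the mean-width monotonicity of Proposition~\ref{prop_meanwidth} (at $p=1$) to get $w(D^m S_v K)\le w(D^m K)$, then pass to the ball by iterated Steiner symmetrization and Hausdorff continuity. The paper only sketches the monotonicity step (see the proof of the subsequent, more general theorem) and leaves the standard convergence-to-a-ball argument implicit, so your write-up is simply a more detailed version of the same proof.
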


Some more can be said:
\begin{theorem}
	Let $L \in \conbod$ satisfy $R_v L = L$ for every $v \in S^{n-1}$ (see Section \ref{sec_invariant_sets} for examples of invariant sets).
	Among all convex bodies $K \subseteq \R^n$ of a fixed volume, Euclidean balls minimize the integral
	\[\int_L h_{D^m K}(x) d x.\] 
\end{theorem}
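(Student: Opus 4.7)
The plan is to show that for every $v \in S^{n-1}$ the functional $K \mapsto \int_L h_{D^m K}(x)\,dx$ does not increase when $K$ is replaced by its usual Steiner symmetrical $S_v K$, and then to invoke the classical fact (recalled in the introduction) that any convex body can be transformed by a sequence of Steiner symmetrizations into a Euclidean ball of the same volume.

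The monotonicity step combines two ingredients. Theorem \ref{thm_schneider_difference_body} gives $D^m(S_v K) \subseteq \bar S_v(D^m K)$, hence $h_{D^m(S_v K)}(x) \le h_{\bar S_v(D^m K)}(x)$ for every $x \in \M$. The proof of Proposition \ref{prop_meanwidth} supplies the pointwise estimate $h_{\bar S_v C}(x) \le \tfrac12\bigl(h_C(x) + h_C(R_v x)\bigr)$ for any convex body $C \subseteq \M$, which I apply to $C = D^m K$. Integrating over $L$, the change of variables $x \mapsto R_v x$ together with the hypothesis $R_v L = L$ and the self-adjointness of $R_v$ with respect to the trace inner product on $\M$ give $\int_L h_{D^m K}(R_v x)\,dx = \int_L h_{D^m K}(x)\,dx$. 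Assembling the three pieces yields $\int_L h_{D^m(S_v K)}(x)\,dx \le \int_L h_{D^m K}(x)\,dx$, while $|S_v K|_n = |K|_n$.

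For the passage to the limit I would use the formula $D^m K = K^{m+1} P_m$ from the proof of Theorem \ref{thm_schneider_difference_body}: since Cartesian product and right-multiplication by a fixed matrix are Hausdorff-continuous on convex bodies, so is $K \mapsto D^m K$, so $h_{D^m K}$ depends continuously on $K$ uniformly on bounded sets, and the boundedness of $L$ makes $K \mapsto \int_L h_{D^m K}(x)\,dx$ continuous in the Hausdorff metric. Iterating the monotonicity inequality along a Steiner-symmetrization sequence converging to a Euclidean ball of the same $n$-dimensional volume as $K$ and passing to the limit completes the proof. The main obstacle is really the interplay between $\bar S_v$ and the support function captured by the pointwise estimate, combined with the delicate cancellation coming from the invariance $R_v L = L$; the continuity-plus-approximation finish is essentially routine.
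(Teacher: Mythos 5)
Your proof is correct and follows essentially the same route as the paper: the pointwise bound $h_{\bar S_v C}(x)\le\frac12\bigl(h_C(x)+h_C(R_vx)\bigr)$ from the proof of Proposition \ref{prop_meanwidth} combined with the invariance $R_vL=L$ gives $\int_L h_{\bar S_v E}\le\int_L h_E$, and Theorem \ref{thm_schneider_difference_body} then yields the monotonicity under Steiner symmetrization of $K$. The only difference is that you spell out the Hausdorff-continuity and limiting argument, which the paper leaves implicit; that addition is sound.
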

\begin{proof}
	As in the proof of Proposition \ref{prop_meanwidth}, for every $E \in \conbod$, 
	\[\int_L h_{\bar S_v E}(x) d x \leq \int_L h_E(x) d x.\]
	For $E = D^m K$, by Theorem \ref{thm_schneider_difference_body},
	\[
		\int_L h_{D^m S_v K} \leq\int_L h_{\bar S_v D^m K} \leq\int_L h_{D^m K} .
	\]
\end{proof}

\section{Invariant Sets}
\label{sec_invariant_sets}

In this section we study the sets $K$ that remain invariant under $\bar S_v$.

\begin{definition}
	A convex body $K \in \conbod$ is said to be $O_n$ invariant if $R K = K$ for every $n$-dimensional rotation $R \in O_n$.
\end{definition}
Notice that this does not necessarily imply that $K$ is a ball, since $R$ is an $n\times n$ matrix, not $nm \times nm$.

Contrary to the classical case $m=1$ where we only have the centered Euclidean balls of different radius, many convex bodies in $\M$ are $O_n$ invariant. 
As examples we have the unit ball $\B[nm]$, the product of balls $(\B)^m$, the unit ball of any operator norm of the type $B_{K,\B}$ and the ball of any unitary invariant norm (in particular any Schatten class). Additional examples are the convex bodies $\Pi_{\B,p} K$ defined in \cite{HLPRY23_2}.

Here we show that $O_n$ invariant convex bodies characterize the sets invariant by the operator $\bar S_v$.
\begin{proposition}
	\label{prop_invariance_equivalence}
	Let $K \in \conbod$. The following statements are equivalent:

	\begin{enumerate}
		\item For every $v \in \s$, $\bar S_v K = K$.
		\item For every $v \in \s$, $R_v K = K$.
		\item $K$ is $O_n$ invariant.
	\end{enumerate}
\end{proposition}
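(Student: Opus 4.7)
The plan is to prove the cycle $(1) \Leftrightarrow (2) \Leftrightarrow (3)$, using condition $(2)$ as the hub. The direction $(3) \Rightarrow (2)$ is immediate, since each $R_v$ lies in $O_n$. For $(2) \Rightarrow (3)$ I invoke the Cartan--Dieudonn\'e theorem: every element of $O_n$ is a finite product of reflections through hyperplanes of $\R^n$ through the origin, and each such hyperplane has the form $v^\perp$ for some $v \in \s$. Hence invariance of $K$ under every $R_v$ extends by composition to invariance under every $R \in O_n$.

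The substantive part is the equivalence $(1) \Leftrightarrow (2)$, which I plan to handle by slicing along the orthogonal decomposition $\M = v^{\perp m} \oplus v^m$. Fix $v \in \s$ and, for each $y \in v^{\perp m}$, set $K_y := K \cap (y + v^m)$. Two observations drive the argument: first, by Definition~\ref{def_setsym} the slice of $\bar S_v K$ at $y$ is exactly $y + \tfrac 12 D(K_y)$, a subset of $y + v^m$ that is centrally symmetric about the point $y$; second, since $R_v$ fixes $v^{\perp m}$ pointwise and satisfies $R_v(vt) = -vt$, it acts on each affine flat $y + v^m$ as the reflection through $y$.

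With these in hand, $(2) \Rightarrow (1)$ is routine: if $R_v K = K$, each $K_y$ is invariant under reflection through $y$, so $K_y = y + \tfrac 12 D(K_y)$, and unionizing over $y$ yields $\bar S_v K = K$. Conversely, for $(1) \Rightarrow (2)$, equating the $y$-slices of $\bar S_v K$ and $K$ forces $K_y = y + \tfrac 12 D(K_y)$; since the right-hand side is centered at $y$, the slice $K_y$ is symmetric about $y$ and hence invariant under $R_v$, and unionizing gives $R_v K = K$. The only subtle point --- and the main obstacle, such as it is --- lies in $(1) \Rightarrow (2)$: one must use that the $y$-slice of $\bar S_v K$ is symmetric about the specific basepoint $y \in v^{\perp m}$, and not merely about some drifted point in $y + v^m$, as it is precisely this matching of centers that lets us conclude slice-wise invariance under $R_v$ itself rather than under a translated reflection.
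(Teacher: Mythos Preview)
Your proposal is correct and follows essentially the same route as the paper: the equivalence $(2)\Leftrightarrow(3)$ via the fact that reflections generate $O_n$, and $(1)\Leftrightarrow(2)$ by comparing, slice by slice over $y\in v^{\perp m}$, the set $K\cap(y+v^m)$ with its centered difference body $y+\tfrac12 D(K\cap(y+v^m))$. The paper phrases the slicing through the parametrized section $K_{v,y}=\{t\in\M[1,m]:y+vt\in K\}\subseteq\R^m$ and the equivalence $K_{v,y}=\tfrac12 D K_{v,y}\Leftrightarrow K_{v,y}=-K_{v,y}$, which is exactly your argument transported along the affine isomorphism $t\mapsto y+vt$.
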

If $K$ is also origin-symmetric, it is the unit ball of a matrix norm $\|\cdot\|_K$.
Then $O_n$ invariance just means left-orthogonal invariance of the norm.

\begin{proof}
	Since the reflections $R_v, v\in \s$ generate $O_n$, $(2) \Leftrightarrow (3)$ are clearly equivalent.

	To see $(1) \Leftrightarrow (2)$, for any $v \in \s, y \in v^{\perp m}$ consider the set
	\[K_{v,y} = \{t \in \M[1,m]: y+vt \in K\}.\]
	The property $\bar S_v K = K$ holds if and only if for any $y \in v^{\perp m}$,
	\[K_{v,y} = (\bar S_v K)_{v,y} = (\frac 12DK)_{v,y},\]
	and this is equivalent to $K_{v,y} = -(K_{v,y}) = (R_v K)_{v,y}$, which is equivalent to $(2)$.
\end{proof}

For functions we obtain the following characterization:
\begin{proposition}
	An $m$-quasi-concave function $f$ is Steiner-concave if and only if $f(R x)=f(x)$ for every $R \in O_n$.
\end{proposition}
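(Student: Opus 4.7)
The strategy is to reduce the statement to a set-theoretic invariance of super-level sets and then invoke the same argument used in the proof of Proposition~\ref{prop_invariance_equivalence}.

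First, I would apply property \ref{prp_steiner_concave_is_invariant} to rewrite Steiner-concavity of $f$ as the condition $f^{(v)}=f$ for every $v\in S^{n-1}$. Since $f^{(v)}$ is defined through the layer-cake construction whose super-level sets are $\bar S_v\{f\ge t\}$ (Definition~\ref{def_funcsym}), this is equivalent to $\bar S_v\{f\ge t\}=\{f\ge t\}$ for every $v\in S^{n-1}$ and every $t>0$.

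The key step is the next one, where $m$-quasi-concavity enters. For each $v\in S^{n-1}$ and $y\in v^{\perp m}$, the slice $\{f\ge t\}\cap(y+v^m)$ is a convex subset of the affine subspace $y+v^m$. The corresponding slice of $\bar S_v\{f\ge t\}$ equals $y+\frac 12 D(\{f\ge t\}\cap(y+v^m))$, so the invariance condition $\bar S_v\{f\ge t\}=\{f\ge t\}$ amounts to each such convex slice being centrally symmetric about $y$ inside $y+v^m$. Since $R_v$ fixes each affine subspace $y+v^m$ setwise and acts on it as the reflection through $y$, for a convex slice this central symmetry is equivalent to $R_v$-invariance of the slice. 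Assembling over $y\in v^{\perp m}$ gives $\bar S_v\{f\ge t\}=\{f\ge t\}$ if and only if $R_v\{f\ge t\}=\{f\ge t\}$, which is the functional analogue of the equivalence $(1)\Leftrightarrow(2)$ in Proposition~\ref{prop_invariance_equivalence}.

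To finish I would observe that $R_v\{f\ge t\}=\{f\ge t\}$ for every $t>0$ is equivalent to $f\circ R_v=f$, and that as $v$ ranges over $S^{n-1}$ the reflections $R_v$ generate $O_n$, so the entire family of conditions is equivalent to $f\circ R=f$ for every $R\in O_n$. The main point to watch is the single use of $m$-quasi-concavity: without convexity of the slices, $\frac 12 DC = C$ is strictly stronger than $-C=C$, and the bridge between $\bar S_v$-invariance and $R_v$-invariance would break. Everything else is layer-cake bookkeeping and the generation of $O_n$ by reflections, both of which have already been used in the paper.
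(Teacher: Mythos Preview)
Your proof is correct and follows essentially the same route as the paper: pass to super-level sets, reduce $\bar S_v$-invariance to $R_v$-invariance, and use that reflections generate $O_n$. The only minor difference is that the paper starts directly from the evenness clause in Definition~\ref{def_steinerconcave} to get $f(R_v x)=f(x)$, whereas you reach the same point via property~\ref{prp_steiner_concave_is_invariant} and the $(1)\Leftrightarrow(2)$ argument of Proposition~\ref{prop_invariance_equivalence}; this extra step is harmless and, as you correctly point out, is precisely where $m$-quasi-concavity is used.
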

\begin{proof}
	By definition, an $m$-quasi-concave function $f$ is Steiner-concave if and only if $f(R_v x) = f(x)$.
	Since a function is uniquely determined by its level sets, an $m$-quasi-concave function $f$ is Steiner-concave if and only if the level sets are $O_n$ invariant.
	The result follows by Proposition \ref{prop_invariance_equivalence}.
\end{proof}

Finally, as in the classical case, every convex body can be transformed into an extremal set.

\begin{theorem}
	Let $K \subseteq \M$ be a convex body, then there exists an $O_n$ invariant convex body $L \subseteq \M$ such that for every $\varepsilon > 0$ there are $k \geq 1$ vectors $v_1, \ldots, v_k$ with
	\[d_H(\bar S_{v_k} \circ \cdots \circ \bar S_{v_1} K, L) < \varepsilon,\]
	where $d_H$ is the Hausdorff distance.
\end{theorem}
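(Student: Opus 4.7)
The plan is to follow the classical Blaschke-selection argument: confine the iterates $\bar S_{v_k}\circ\cdots\circ\bar S_{v_1}K$ to a Hausdorff-precompact family, select $L$ in the closure minimising a monotone functional, and use the equality case of the functional to force $L$ to be $O_n$-invariant.

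For the uniform bound, I would iterate the inequality $h_{\bar S_v K'}(z)\le\frac{1}{2}(h_{K'}(z)+h_{K'}(R_v z))$ established in the proof of Proposition \ref{prop_meanwidth}. Since $\|h_{R_v K'}\|_\infty=\|h_{K'}\|_\infty$, induction on the length of the iterate gives $\|h_{K'}\|_\infty\le\|h_K\|_\infty$, so every iterate sits inside $\|h_K\|_\infty\cdot\B[nm]$; combined with the lower bound $\Vol{K'}\ge\Vol K>0$ from \ref{prp_symincreases_volume}, the Blaschke selection theorem shows that the Hausdorff closure $\mathcal F$ of the set of iterates is compact in the Hausdorff metric.

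Next I would take the functional $\phi(K'):=\int_{\S}h_{K'}(z)^2\,dz$. By Proposition \ref{prop_meanwidth} with $p=2$, $\phi$ is non-increasing under each $\bar S_v$ and the equality case $\phi(\bar S_vK')=\phi(K')$ forces $R_vK'=K'$; moreover $\phi$ is continuous in Hausdorff distance since support functions converge uniformly on $\S$. Let $L\in\mathcal F$ minimise $\phi$. Writing $L=\lim_j K_j$ with each $K_j$ an iterate of $K$, each $\bar S_vK_j$ is again an iterate and so lies in $\mathcal F$; extracting a Hausdorff-convergent subsequence gives $\bar S_vK_{j_l}\to M$ for some $M\in\mathcal F$, and by minimality $\phi(M)\ge\phi(L)$. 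If $R_vL\ne L$ for some $v\in\s$, then $\phi(\bar S_vL)<\phi(L)$; combined with the key inclusion $M\subseteq\bar S_vL$ (addressed in the next paragraph), this yields $\phi(M)\le\phi(\bar S_vL)<\phi(L)$, a contradiction. Thus $R_vL=L$ for every $v\in\s$, and Proposition \ref{prop_invariance_equivalence} forces $L$ to be $O_n$-invariant. Since $L\in\mathcal F$ is by definition approximable in Hausdorff distance by iterates of $K$, the conclusion follows.

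The only genuinely technical step is the upper semi-continuity $M\subseteq\bar S_vL$. I would deduce it directly from \eqref{eq_sym_def}: any $x\in M$ is the limit of points $x_{j_l}=y_{j_l}+v\frac{t_{j_l}-s_{j_l}}{2}\in\bar S_vK_{j_l}$ with $y_{j_l}\in v^{\perp m}$, $t_{j_l},s_{j_l}\in\M[1,m]$, and $y_{j_l}+vt_{j_l},\,y_{j_l}+vs_{j_l}\in K_{j_l}$. Because all iterates lie in a fixed Euclidean ball, the sequences $y_{j_l},t_{j_l},s_{j_l}$ are bounded, and after passing to a further subsequence one obtains $y_{j_l}\to y$, $t_{j_l}\to t$, $s_{j_l}\to s$. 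Closedness of $L$, of $v^{\perp m}$, and continuity of matrix multiplication give $v^ty=0$ and $y+vt,\,y+vs\in L$, whence $x=y+v\frac{t-s}{2}\in\bar S_vL$. I expect this to be the only step that requires care beyond assembling the previously established material; full Hausdorff continuity of $\bar S_v$ is never needed.
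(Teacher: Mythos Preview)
Your proposal is correct and follows essentially the same route as the paper: both minimise an $L_p$ norm of the support function over the family of iterates, extract a Hausdorff limit $L$ via Blaschke selection, and invoke the equality case of Proposition~\ref{prop_meanwidth} to force $R_vL=L$ for every $v$. The one noteworthy difference is that the paper simply asserts $\|h_{\bar S_v L}\|_p=\lim_i\|h_{\bar S_v\bar S_{\bar v^{(i)}}K}\|_p$ ``by continuity'', whereas you supply the missing justification by proving the upper semi-continuity $M\subseteq\bar S_vL$ directly from \eqref{eq_sym_def}; this makes your version slightly more self-contained.
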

\begin{proof}
	For a $k$-tuple of vectors $\bar v = (v_1, \ldots, v_k)$ we denote by $\bar S_{\bar v} = \bar S_{v_k} \circ \cdots \circ \bar S_{v_1}$ the composition of the symmetrizations $\bar S_{v_i}$.

	Let $R>r>0$ be such that $B(0,R) \supseteq K \supseteq B(0,r)$.
	By property \eqref{prp_sym_monotonicity} and the fact that balls are $O_n$ invariant, we always have $B(0,R) \supseteq \bar S_{\bar v} K \supseteq B(0,r)$.
	Fix any $p>1$ and consider the infimum
	\[w_{p,\text{min}} = \inf\{\|h_{\bar S_{\bar v} K}\|_p: k\geq 1, \bar v = (v_1, \ldots, v_k), v_i \in S^{n-1}\}\]
	which in view of the inclusion $\bar S_{\bar v} K \supseteq B(0,r)$, has to be strictly positive.
	Take a sequence of tuples of vectors $\bar v^{(i)}$ such that $\|h_{\bar S_{\bar v^{(i)}} K}\|_p \to w_{p,\text{min}}$.
	Since $B(0,R) \supseteq \bar S_{\bar v^{(i)}} K$ we may apply the Blaschke selection theorem to obtain a subsequence (again denoted by $\bar v^{(i)}$)
	such that $S_{\bar v^{(i)}} K$ converges in the Hausdorff distance to a convex body $L$.
	By continuity, $\|h_L\|_p = w_{p,\text{min}}$.

	Take any $v \in S^{n-1}$. By continuity and Proposition \ref{prop_meanwidth},
	\begin{align}
		w_{p,\text{min}}
		&= \|h_{L}\|_p \\
		&\geq \|h_{\bar S_v L}\|_p \\
		&= \lim_{i \to \infty} \|h_{\bar S_v \bar S_{\bar v^{(i)}}} K \|_p \\
		&\geq w_{p,\text{min}}
	\end{align}
	and we get $\|h_{L}\|_p =\|h_{\bar S_v L}\|_p$.
	The equality case of Proposition \ref{prop_meanwidth} (notice that we chose $p>1$) implies $L = R_v L$.

	Since $v \in S^{n-1}$ was arbitrary, $L$ must be $O_n$ invariant by Proposition \ref{prop_invariance_equivalence}.
\end{proof}

\subsection*{Concluding Remarks}
The results in this last section point to the fact that repeated application of $\bar S_v$ makes a matrix norm $\|\cdot\|_K$ more and more left-invariant. Of course one can also consider the transformation $K \mapsto (\bar S_v K^t)^t$ which makes the norm more right-invariant.
If the norm is transformed by a sequence of left and right symmetrizations into a limiting norm $\|\cdot\|_L$, then it will be unitarily invariant and, by singular value decomposition, $L$ will be uniquely determined by the set of singular values of the matrices inside $L$.
Unfortunately it appears that the set of singular values may vary from $K$ to $\bar S_v K$ for arbitrary $K$, so it is not clear how (if possible) to determine $L$ from $K$.

\subsection*{Acknowledgments}
The author was supported by Grant RYC2021-031572-I, funded by the Ministry of Science and Innovation/State Research Agency/10.13039 /501100011033 and by the E.U. Next Generation EU/Recovery, Transformation and Resilience Plan.

The author thank the reviewers for their careful reading and very helpful remarks.

\bibliographystyle{abbrv}
\bibliography{../references}
\end{document}